\numberwithin{equation}{section}
\def\cocoa{{\hbox{\rm C\kern-.13em o\kern-.07em C\kern-.13em o\kern-.15em A}}}
\newtheorem{theorem}{Theorem}[section]
\newtheorem{lemma}[theorem]{Lemma}
\newtheorem{proposition}[theorem]{Proposition}
\newtheorem{corollary}[theorem]{Corollary}
\theoremstyle{definition}
\newtheorem{remark}[theorem]{Remark}
\newtheorem{definition}[theorem]{Definition}
\newtheorem{example}[theorem]{Example}
\newtheorem{construction}[theorem]{Construction}
\newcommand {\sHom}{\mathcal{H}\kern -0.25ex{\mathit om}}
\newcommand {\sExt}{\mathcal{E}\kern -0.25ex{\mathit xt}}
\newcommand {\sTor}{\mathcal{T}\kern -0.25ex{\mathit or}}
\newcommand {\Spl}{Spl}
\newcommand {\im}{\mathrm{im}}
\newcommand {\rk}{\mathrm{rk}}
\newcommand {\Ext}{\mathrm{Ext}}
\newcommand {\Hom}{\mathrm{Hom}}
\newcommand {\Hilb}{\mathcal{H}\kern -0.25ex{\mathit ilb\/}}
\newcommand {\rH}{{H}}
\newcommand {\rh}{{h}}
\newcommand {\cK}{\mathcal{K}}
\newcommand {\bC}{\mathbb{C}}
\newcommand {\bP}{\mathbb{P}}
\newcommand{\cE}{{\mathcal E}}
\newcommand{\cF}{{\mathcal F}}
\newcommand{\cM}{{\mathcal M}}
\newcommand{\cO}{{\mathcal O}}
\newcommand{\cG}{{\mathcal G}}
\newcommand{\cI}{{\mathcal I}}
\newcommand{\Pic}{\operatorname{Pic}}
\newcommand{\NS}{\operatorname{NS}}
\def\p#1{{\bP^{#1}}}
\def\ga#1{{{\accent"12 #1}}}
\def\mapright#1{\mathbin{\smash{\mathop{\longrightarrow}
\limits^{#1}}}}
\title[Special Ulrich bundles on regular surfaces]{Special Ulrich bundles on regular surfaces\\ with non--negative Kodaira dimension}
\thanks{}
\subjclass[2010]{Primary 14J60; Secondary 14J27, 14J28, 14J29}
\keywords{Vector bundle, Ulrich bundle, properly elliptic surface, surface of general type.}
\author[Gianfranco Casnati]{Gianfranco Casnati}
\thanks{The author is a member of GNSAGA group of INdAM and is supported by the framework of the MIUR grant Dipartimenti di Eccellenza 2018-2022 (E11G18000350001).}
\begin{document}

\begin{abstract}
Let $S$ be a regular surface endowed with a very ample line bundle $\mathcal O_S(h_S)$. Taking inspiration from a very recent result by D. Faenzi on $K3$ surfaces, we prove that  if $\cO_S(h_S)$ satisfies a short list of technical conditions, then such a polarized surface  supports special Ulrich bundles of rank $2$. As applications, we deal with general embeddings of regular surfaces, pluricanonically embedded regular surfaces and  some properly elliptic surfaces of low degree in $\p N$.
Finally, we also discuss about the size of the families of Ulrich bundles on $S$ and we inspect the existence of special Ulrich bundles on surfaces of low degree. 

\end{abstract}

\maketitle

\section{Introduction}
Let $\p N$ be the projective space of dimension $N$ over an algebraically closed field $k$ of characteristic $0$. If $X\subseteq\p N$ is a {\sl variety}, i.e. an integral closed subscheme, then it is naturally endowed with the very ample line bundle $\cO_X(h_X):=\cO_{\p N}(1)\otimes\cO_X$. We say that a sheaf $\cE$ on $X$ is {\sl Ulrich (with respect to $\cO_X(h_X)$)} if 
$$
h^i\big(X,\cE(-ih_X)\big)=h^j\big(X,\cE(-(j+1)h_X)\big)=0,
$$
for each $i>0$ and $j<\dim(X)$.

Ulrich bundles on a variety $X$ have many properties: we refer the interested reader to \cite{E--S--W}, where the authors also raised the following questions.
\medbreak
\noindent
{\bf Questions.} 
Is every variety (or even scheme) $X\subseteq\p N$ the support of an Ulrich sheaf? If so, what is the smallest possible rank for such a sheaf?
\medbreak

When $C$ is a {\sl curve}, i.e. a smooth variety of dimension $1$, the above questions have very easy answers: indeed if $g$ is the genus of $C$ and $\mathcal L\in\Pic^{g-1}(C)$ satisfies $h^0\big(C,\mathcal L\big)=0$, then $\mathcal L(h_C)$ is an Ulrich line bundle. 

At present, no general answers to the above questions are known when $X$ has dimension greater than $1$, though a great number of partial results have been proved: without any claim of completeness, we recall \cite{A--F--O, Bea2,Cs4,Cs4erratum,C--K--M,E--He, Fa,F--M, F--PL,MR, K--MR1,K--MR2,MR--PL2}. The interested reader can also refer to the recent survey  \cite{Bea3} for further results.

In this paper we study the case of {\sl surfaces}, i.e. smooth varieties of dimension $2$. In particular, following the argument used by D. Faenzi for $K3$ surfaces in \cite{Fa} we deal with surfaces $S$ with $q(S)=0$, partially extending analogous results proved in \cite{Bea2,Cs4,Cs4erratum} when $q(S)=p_g(S)=0$. 

In order to state our main result, we quickly recall a few facts and definitions. Recall that a coherent sheaf $\cG$ on $S$ is called {\sl simple} if $\Hom_S\big(\cG,\cG\big)\cong k$. There exists a (possibly non--separated) coarse moduli space $\Spl_S(r;c_1,c_2)$ parameterizing simple coherent torsion--free sheaves on $S$ with fixed rank $r$ and Chern classes $c_1$, $c_2$ (see \cite{A--K}). 

Moreover, if we set $
\mu(\cG):=c_1(\cG)h_S/\rk(\cG)$, the coherent torsion--free sheaf $\cG$ is {\sl $\mu$--stable (with respect to $\cO_S(h_S)$)} if  $\mu(\mathcal K) < \mu(\cG)$ for each subsheaf $\mathcal K$ with $0<\rk(\mathcal K)<\rk(\cG)$. There exists a quasi--projective scheme $M_S(r;c_1,c_2)$ parameterizing $\mu$--stable coherent torsion--free sheaves on $S$ with fixed rank $r$ and Chern classes $c_1$, $c_2$. Such a $M_S(r;c_1,c_2)$ can be identified with an open subset of $\Spl_S(r;c_1,c_2)$.

An Ulrich bundle $\cE$ of rank $2$ on a surface $S$ is called  {\sl special} if 
$$
c_1(\cE)=c_1^{sp}:=3h_S+K_S,
$$
$K_S$ being a canonical divisor. Special Ulrich bundles are easier to construct than arbitrary Ulrich bundles. Indeed, the Serre duality implies that a rank $2$ bundle $\cE$ with $c_1(\cE)=c_1^{sp}$ is Ulrich if and only if 
$$
h^0\big(S,\cE(-h_S)\big)=h^1\big(S,\cE(-h_S)\big)=0,
$$
Moreover, when $\cE$ is a special Ulrich bundle, it is easy to check that 
$$
c_2(\cE)=c_2^{sp}:=\frac{5h_S^2+3h_SK_S}2+2\chi(\cO_S).
$$
Finally recall that a line bundle $\mathcal L$ is {\sl non--special} if $h^1\big(S,\mathcal L\big)=0$.

We are now able to state the main result of the paper.

\begin{theorem}
\label{tMain}
Let $S$ be a surface  with $q(S)=0$ and $p_g(S)\ge1$, endowed with a very ample non--special line bundle $\cO_S(h_S)$. Assume $h^0\big(S,\cO_S(2K_S-h_S)\big)=0$.

If $h_S^2+4\ge h_SK_S$, then $S$ supports special simple Ulrich bundles $\cE$ of rank $2$. If $h_S^2>h_SK_S$ and the complement $S_0$ of the union of smooth rational curves is dense in $S$, then the aforementioned bundle $\cE$ can be chosen $\mu$--stable.

The point corresponding to $\cE$ in either $\Spl_S(2;c_1^{sp},c_2^{sp})$ or $M_S(2;c_1^{sp},c_2^{sp})$ satisfies  $\dim\Ext_S^2\big(\cE,\cE\big)=p_g(S)$, is smooth and lies in a unique component of dimension $h_S^2-K_S^2+5\chi(S)$.

\end{theorem}

Before dealing with the structure of the paper we make some comments about the hypothesis in the statement above. 

If $p_g(S)=0$ the existence of special Ulrich bundles on $S$ has been proved with the same construction in \cite{Cs4} without the hypothesis $h^0\big(S,\cO_S(2K_S-h_S)\big)=0$. 

Nevertheless, the proof of the $\mu$--stability of $\cE$ given here cannot be extended to the case $p_g(S)=0$ when the Kodaira dimension $\kappa(S)$ is negative, because it rests on the hypothesis $S_0\ne\emptyset$, which necessarily imply $\kappa(S)\ge0$. 

The $\mu$--stability of $\cE$ when $\kappa(S)=-\infty$ has been proved in \cite{Cs4erratum} with a different argument, under the additional hypothesis that $k$ is uncountable. 

A priori, it could happen that $S_0$ is not open even when $\kappa(S)\ge0$. E.g., in \cite{B--MK} the authors give an example of a $K3$ surface containing infinitely many smooth rational curves. 

In particular, we cannot deduce the $\mu$--stability of $\cE$ without further information. E.g. if  $S$ is a surface of general type, i.e. $\kappa(S)=2$, then $S$ contains at most a finite number of smooth rational curves by \cite{L--M}, hence $S_0$ is certainly dense in this case.
 
 Finally, the vanishing $h^0\big(S,\cO_S(2K_S-h_S)\big)=0$ is certainly fullfilled if $\kappa(S)\le1$, thanks to \cite[Corollary 2.2.7]{Laz}. If $\kappa(S)=2$ this is no longer true (see e.g. Example \ref{eRa}).

 \medbreak

We now deal with the content of the paper. In Section \ref{sMain} we prove the above theorem via a finite induction. We first construct a vector bundle of rank $2$ such that $c_1(\cF)=h_S+K_S$, $h^1\big(S,\cF\big)=0$ and $h^0\big(S,\cF\big)=p_g(S)$ using standard techniques (see Section \ref{sStart}). 

Then, via a classical result due to Artamkin (see \cite{Ar}), we construct inductively a sequence of vector bundles  $\cF_d$ of rank $2$ with $1\le d\le p_g(S)$ such that $c_1(\cF_d)=c_1(\cF)$, $h^1\big(S,\cF_d\big)=0$ and $h^0\big(S,\cF_d\big)=p_g(S)-d$ (see Section \ref{sStep}). 

As pointed out above, it follows that $\cE:=\cF_{p_g(S)}(h_S)$ is a special Ulrich bundle of rank $2$: in the paper we show that it has all the properties listed in the statement of Theorem \ref{tMain}. 

It is natural  to ask whether surfaces as in Theorem \ref{tMain} actually exist. 
On the one hand, each surface $S$ with $q(S)=0$ can be endowed with many very ample line bundles satisfying the hypothesis of Theorem \ref{tMain} (see Example \ref{eLarge}). 

On the other hand, there are several interesting polarized surfaces satisfying the hypothesis of Theorem \ref{tMain} besides the case of $K3$ surfaces described in \cite{Fa}. 
For instance, each regular surface of general type $S\subseteq \p N$ with $\cO_S(h_S)\cong\cO_S(n K_S)$ for an $n\ge3$ (see Example \ref{eGeneral}). Some properly elliptic surfaces  in $\p 4$ provide a second interesting example (see Example \ref{eElliptic}). 

In Section \ref{sWild} we make some comments about the size of families of Ulrich bundles on the surfaces described in Examples \ref{eLarge}, \ref{eGeneral} and \ref{eElliptic}.

Finally, in Section \ref{sLow}, we deal with surfaces of low degree in $\p N$, extending some results from \cite{Cs5}.

\subsection{Acknowledgement}
I would like to express my thanks to D. Faenzi and A.F. Lopez for many helpful discussions and suggestions about the content of the present paper. I am particularly indebted with the referee for her/his criticisms, questions, remarks and suggestions which have considerably improved the whole exposition.

\section{Some preliminary facts}
\label{sGeneral}
In this section we list some results which will be used in the paper. For all the other necessary results we refer the reader to \cite{Ha2}.

If $\cG$ and $\mathcal H$ are coherent sheaves on the surface $S$, then  the Serre duality holds
\begin{equation}
\label{Serre}
\Ext_S^i\big(\mathcal H,\cG(K_S)\big)\cong \Ext_S^{2-i}\big(\cG,\mathcal H\big)^\vee,
\end{equation}
(see \cite[Proposition 7.4]{Ha3}: see also \cite{Ba}).
Thus $q(S):=h^1\big(S,\cO_S\big)=h^1\big(S,\cO_S(K_S)\big)$, $p_g(S):=h^2\big(S,\cO_S\big)=h^0\big(S,\cO_S(K_S)\big)$ and $\rh^2\big(S,\cO_S(K_S)\big)=\rh^0\big(S,\cO_S\big)=1$.

Assume that $\mathcal H$ is torsion--free. Then there is a natural injective morphism $\cO_S(K_S)\to\sHom_S\big(\mathcal H,\mathcal H(K_S)\big)$. Thus the map induced on cohomology
$$
\rH^0\big(S,\cO_S(K_S)\big)\to\Hom_S\big(\mathcal H,\mathcal H(K_S)\big)\cong\Ext_S^2\big(\mathcal H,\mathcal H\big)^\vee
$$
is injective too. In particular
\begin{equation}
\label{Mukai}
\dim\Hom_S\big(\mathcal H,\mathcal H(K_S)\big)=\dim\Ext_S^2\big(\mathcal H,\mathcal H\big)\ge p_g(S).
\end{equation}
If $\cG$ is a vector bundle on $S$, then 
the Riemann--Roch theorem for $\cG$ on $S$ is
\begin{equation}
\label{RRGeneral}
\rh^0\big(S,\cG\big)+\rh^{2}\big(S,\cG\big)=\rh^{1}\big(S,\cG\big)+\rk(\cG)\chi(\cO_S)+\frac{c_1(\cG)(c_1(\cG)-K_S)}2-c_2(\cG).
\end{equation}

We finally recall the Cayley--Bacharach construction of vector bundles on $S$. Let $Z\subseteq S$ be a $0$--dimensional locally complete intersection  subscheme and let $\mathcal L\in\Pic(S)$. Recall that  $Z$ satisfies the  {\sl Cayley--Bacharach condition with respect to $\mathcal L$} if
$$
\rh^0\big(S,\cI_{Z\vert S}\otimes\mathcal L\big)=\rh^0\big(S,\cI_{Z'\vert S}\otimes\mathcal L\big)
$$
for each subscheme $Z'\in Z$  with $\deg(Z')=\deg(Z)-1$.

\begin{theorem}
\label{tCB}
Let $S$ be a surface and let $Z\subseteq S$ be a $0$--dimensional locally complete intersection subscheme.

Then there exists a vector bundle $\cF$ of rank $2$ fitting into an exact sequence of the form
$$
0\longrightarrow\cO_S\longrightarrow\cF\longrightarrow\cI_{Z\vert S}\otimes\mathcal L\longrightarrow0
$$
if and only if $Z$ has the Cayley--Bacharach property with respect to $\mathcal L(K_S)$.
\end{theorem}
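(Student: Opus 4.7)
The plan is to invoke the Serre construction: extensions of the form $0\to\cO_S\to\cF\to\cI_{Z\vert S}\otimes\mathcal L\to 0$ are classified by $\Ext^1_S(\cI_{Z\vert S}\otimes\mathcal L,\cO_S)$, and the task is to single out those classes whose middle term is locally free.

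First I would apply the local-to-global spectral sequence for $\sExt$, obtaining the exact sequence
$$H^1(S,\mathcal L^{-1})\to\Ext^1_S(\cI_{Z\vert S}\otimes\mathcal L,\cO_S)\xrightarrow{\alpha}H^0\bigl(S,\sExt^1_S(\cI_{Z\vert S}\otimes\mathcal L,\cO_S)\bigr)\xrightarrow{\beta}H^2(S,\mathcal L^{-1}).$$
Here $\sHom_S(\cI_Z\otimes\mathcal L,\cO_S)\cong\mathcal L^{-1}$, because $\cI_Z$ agrees with $\cO_S$ outside the codimension-$2$ locus $Z$; and since $Z$ is locally complete intersection of codimension $2$ one has $\sExt^1_S(\cO_Z,\cO_S)=0$, so $\sExt^1_S(\cI_Z\otimes\mathcal L,\cO_S)\cong \sExt^2_S(\cO_Z,\cO_S)\otimes\mathcal L^{-1}$, which by local duality is a line bundle on the Gorenstein scheme $Z$.

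Next I would verify that a class $\xi\in\Ext^1$ produces a locally free $\cF$ if and only if $\alpha(\xi)$ is a local generator of the rank-one sheaf $\sExt^1_S(\cI_Z\otimes\mathcal L,\cO_S)$ at every point of $Z$. Locally at $z\in Z$, the Koszul extension $0\to\cO_{S,z}\to\cO_{S,z}^{\oplus 2}\to\cI_{Z,z}\to 0$ exhibits a generator of the stalk $\sExt^1_{S,z}$, and any other local extension produces a free module precisely when its class differs from the Koszul one by a unit.

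The heart of the argument is to translate this nowhere-vanishing condition on $\alpha(\xi)$ into the Cayley--Bacharach condition with respect to $\mathcal L(K_S)$. Local duality identifies $\sExt^2_S(\cO_Z,\cO_S)\otimes\mathcal L^{-1}$ with the $k$-linear dual of $\mathcal L(K_S)|_Z$, while Serre duality on $S$ gives $H^2(S,\mathcal L^{-1})\cong H^0(S,\mathcal L(K_S))^\vee$; under these identifications the map $\beta$ becomes the transpose of the restriction $\rho\colon H^0(S,\mathcal L(K_S))\to H^0(Z,\mathcal L(K_S)|_Z)$, so that $\im\alpha=(\coker\rho)^\vee$. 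Running through the short exact sequences $0\to\cI_{Z\vert S}\otimes\mathcal L(K_S)\to\cI_{Z'\vert S}\otimes\mathcal L(K_S)\to k(z)\to 0$ attached to the colength-one subschemes $Z'\subset Z$ then shows that the existence of a class in $\im\alpha$ generating the stalk at every point of $Z$ is equivalent to the Cayley--Bacharach condition, and the theorem follows in both directions. The main obstacle I anticipate is the bookkeeping in this final step: I need to check that the natural pairing between $H^0(S,\sExt^1)$ and $H^0(Z,\mathcal L(K_S)|_Z)$ genuinely converts the ``local generator at $z$'' condition into the correct non-vanishing condition dual to a colength-one $Z'$, uniformly in $z$.
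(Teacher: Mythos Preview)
Your proposal outlines the standard Serre--construction proof of this result, and the strategy is correct. However, the paper does not actually give a proof of this theorem: its entire proof reads ``See \cite[Theorem 5.1.1]{H--L}.'' So there is no argument in the paper to compare against; the statement is simply quoted from Huybrechts--Lehn.

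For what it is worth, your sketch is essentially the argument found in that reference. The one place where you should be careful is the final bookkeeping step you flag yourself: the equivalence between ``there exists a class whose image under $\alpha$ generates every stalk'' and the Cayley--Bacharach condition is not the assertion that a \emph{single} element of $(\coker\rho)^\vee$ is nowhere vanishing, but rather that for every $z\in Z$ the evaluation-at-$z$ map from $\im\alpha$ to the stalk is surjective. These are equivalent here because the stalks are one--dimensional and $k$ is infinite (so a finite union of proper subspaces cannot exhaust $\im\alpha$), but this deserves a sentence. With that caveat, your approach is sound and matches the literature.
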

\begin{proof}
See \cite[Theorem 5.1.1]{H--L}.
\end{proof}

We close this section with a remark about the locus of smooth rational curves on a surface $S$.

\begin{definition}
If $S$ is a surface, we denote by $\mathcal D$ the set of smooth rational curves $D\subseteq S$ and we set $S_0:=S\setminus\bigcup_{D\in\mathcal D} D$.
\end{definition}

\begin{remark}
\label{rS_0}
Notice that if $\kappa(S)=-\infty$, then $S_0=\emptyset$. On the opposite side, in the case we are interested in, i.e. $\kappa(S)\ge0$, then $S_0$ is the complement of a countable union of proper closed subschemes.

To prove this assertion we first notice that $D^2\le-1$ for each $D\in\mathcal D$ when  $\kappa(S)\ge0$ by \cite[Proposition III.2.3]{B--H--P--V}. If $A\in \vert D+\vartheta\vert$ for some $\cO_S(\vartheta)\in \Pic^0(S)$, then $DA\le-1$, hence $D$ is a component of $A$. It follows that $D=A$, because they have the same degree with respect to any very ample line bundle on $S$, i.e. the class of each $D\in\mathcal D$ in the N\'eron--Severi group $\NS(S)$ contains exactly one smooth rational curve. 

We deduce from the above discussion that $S_0$ is certainly non--empty and dense if  $k$ is uncountable (see \cite[Exercise V.4.15 (c)]{Ha2}). 
Nevertheless, as pointed out in the introduction, in many cases $S_0$ is actually open and non--empty, without any restriction on the cardinality of $k$ (see \cite{L--M}). 
\end{remark}

\section{The base case}
\label{sStart}
As explained in the introduction, the proof of Theorem \ref{tMain} is by induction. In this section we deal with its base case.

Let $S$ be a surface with $q(S)=0$ and $\cO_S(h_S)$ a non--special very ample  line bundle. If $h^0\big(S,\cO_S(K_S-h_S)\big)=0$, then Equality \eqref{RRGeneral} yields $h^0\big(S,\cO_S(h_S)\big)=N+1$ where
\begin{equation}
\label{Dimension}
N:=\frac{h^2_S-h_SK_S}2+p_g(S).
\end{equation}
In particular $\cO_S(h_S)$ induces an embedding $S\subseteq\p N$. 

Let $X$ be any scheme. In what follows $X^{[N+2]}$ denotes the Hilbert scheme of $0$--dimensional subschemes of degree $N+2$ inside $X$.

\begin{construction}
\label{conE}
Let $S$ be a surface with  $q(S)=0$, endowed with a non--special very ample line bundle $\cO_S(h_S)$. Let $S\subseteq\p N$ be the induced embedding. 

Since $S$ is integral and non--degenerate in $\p N$, it follows the existence of an open non--empty subset $\mathcal Z\subseteq S^{[N+2]}$ whose points correspond to schemes $Z$  of $N+2$ points in general linear position inside $\p N$. Each scheme $Z$ corresponding to a point in $\mathcal Z$ satisfies the hypothesis of Theorem \ref{tCB} with respect to $\cO_S(h_S)$, hence there is a rank $2$ vector bundle $\cF$ fitting into
\begin{equation}
\label{seqUlrich}
0\longrightarrow\cO_S(K_S)\longrightarrow\cF\longrightarrow\cI_{Z\vert S}(h_S)\longrightarrow0.
\end{equation}
Notice that
$$
c_1(\cF)=h_S+K_S,\qquad c_2(\cF)=\frac{h_S^2+h_SK_S}2+\chi(\cO_S)+1.
$$
\end{construction}

By definition $h^0\big(S,\cO_S(h_S)\big)=N+1$, $\rh^0\big(Z,\cO_Z\big)=N+2$ and $h^1\big(S,\cO_S(h_S)\big)=0$. Moreover, the choice of $Z$ implies $h^0\big(S,\cI_{Z\vert S}(h_S)\big)=0$. Thus the cohomology of the exact sequence
\begin{equation}
\label{seqStandard}
0\longrightarrow\cI_{Z\vert S}\longrightarrow\cO_S\longrightarrow\cO_Z\longrightarrow0.
\end{equation}
tensored by $\cO_S(h_S)$ and Equality \eqref{Serre} yield
\begin{equation}
\label{Extension}
\dim\Ext^1_S\big(\cI_{Z\vert S}(h_S),\cO_S(K_S)\big)=h^1\big(S,\cI_{Z\vert S}(h_S)\big)=1,
\end{equation}
i.e. $\cF$ is uniquely determined by $Z$.

\begin{lemma}
\label{lCohomology}
Let $S$ be a surface with $\kappa(S)\ge0$ and $q(S)=0$, endowed with a non--special very ample line bundle $\cO_S(h_S)$. Assume  $h^0\big(S,\cO_S(K_S-h_S)\big)=0$.

Then
$$
h^0\big(S,\cF\big)=p_g(S),\qquad h^0\big(S,\cF(-h_S)\big)=0,\qquad h^1\big(S,\cF\big)=0
$$
 for the vector bundle $\cF$ obtained from a scheme $Z$ as in Construction \eqref{conE}.
\end{lemma}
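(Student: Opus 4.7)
The plan is to read off each of the three cohomological vanishings directly from the defining short exact sequence \eqref{seqUlrich} (and its twist by $\cO_S(-h_S)$), using Serre duality and the generic properties of $Z\in\mathcal{Z}$ built into Construction \ref{conE}.

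First I would handle $h^0(S,\cF)=p_g(S)$. Taking cohomology of \eqref{seqUlrich} and using Serre duality with $q(S)=0$ gives $h^0(\cO_S(K_S))=p_g(S)$ and $h^1(\cO_S(K_S))=0$, so the claim reduces to $h^0(S,\cI_{Z\vert S}(h_S))=0$. This holds because $Z\in\cU_2$ consists of $N+2$ points in general linear position in $\p N$, so any $N+1$ of them span $\p N$, imposing $N+1=h^0(\cO_S(h_S))$ independent linear conditions. Twisting \eqref{seqUlrich} by $\cO_S(-h_S)$ gives $h^0(S,\cF(-h_S))=0$ at once: the hypothesis $h_S^2>h_SK_S$ yields $(K_S-h_S)h_S<0$, so $\cO_S(K_S-h_S)$ is not effective, while $h^0(\cI_{Z\vert S})=0$ since $Z\ne\emptyset$.

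The delicate point is $h^1(S,\cF)=0$. Using $h^1(\cO_S(K_S))=q(S)=0$, this reduces to injectivity of the connecting homomorphism $\delta\colon H^1(\cI_{Z\vert S}(h_S))\to H^2(\cO_S(K_S))$. From the sequence $0\to\cI_{Z\vert S}(h_S)\to\cO_S(h_S)\to\cO_Z\to 0$, non-speciality of $\cO_S(h_S)$ together with the fact that the $N+2$ points impose $N+1$ conditions on $\vert h_S\vert$ yield $h^1(\cI_{Z\vert S}(h_S))=1$, while Serre duality gives $h^2(\cO_S(K_S))=h^0(\cO_S)=1$. Hence both the source and target of $\delta$ are one-dimensional, and it suffices to show $\delta\ne 0$.

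This is the expected main obstacle. I would identify $\delta$ with cup product by the extension class $\xi\in\Ext_S^1(\cI_{Z\vert S}(h_S),\cO_S(K_S))$ defining $\cF$. By the Serre duality \eqref{Serre}, this $\Ext^1$ space is canonically dual to $H^1(\cI_{Z\vert S}(h_S))$, and the induced pairing into $H^2(\cO_S(K_S))\cong k$ is non-degenerate in each argument. Since $\cF$ is a rank $2$ vector bundle while $\cI_{Z\vert S}(h_S)$ is not locally free at the points of $Z$, the extension \eqref{seqUlrich} cannot split, so $\xi\ne 0$. Non-degeneracy then forces $\delta\ne 0$, hence $\delta$ is an isomorphism of one-dimensional spaces, giving $h^1(S,\cF)=0$.
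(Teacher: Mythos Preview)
Your argument is correct. The first two vanishings are handled exactly as in the paper. For $h^1\big(S,\cF\big)=0$, however, you take a genuinely different route: you show the connecting map $\delta$ is \emph{injective} by identifying it with cup product against the extension class $\xi$ and invoking the perfect Serre duality pairing
\[
\Ext_S^1\big(\cI_{Z\vert S}(h_S),\cO_S(K_S)\big)\times H^1\big(S,\cI_{Z\vert S}(h_S)\big)\longrightarrow H^2\big(S,\cO_S(K_S)\big),
\]
together with $\xi\ne0$ (since $\cF$ is locally free while $\cI_{Z\vert S}(h_S)$ is not). The paper instead shows $\delta$ is \emph{surjective}: from $c_1(\cF)=h_S+K_S$ one has $\cF^\vee(K_S)\cong\cF(-h_S)$, so Serre duality gives $h^2\big(S,\cF\big)=h^0\big(S,\cF(-h_S)\big)=0$, which forces $\delta$ onto; since source and target are both one-dimensional, $h^1\big(S,\cF\big)=0$ follows. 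The paper's approach is more elementary, reusing the already proved vanishing $h^0\big(S,\cF(-h_S)\big)=0$ and the rank-$2$ self-duality trick, whereas your argument is more conceptual and would work without knowing $c_1(\cF)$, at the cost of appealing to the cup-product description of connecting homomorphisms.
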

\begin{proof}
Since $h^0\big(S,\cI_{Z\vert S}\big)=0$ and $h^0\big(S,\cO_S(K_S-h_S)\big)=0$, it follows that the cohomology of Sequence \eqref{seqUlrich}  tensored by $\cO_S(-h_S)$ implies $h^0\big(S,\cF(-h_S)\big)=0$.

The vanishing $h^0\big(S,\cI_{Z\vert S}(h_S)\big)=0$ implies $h^0\big(S,\cF\big)=p_g(S)$. The equality $h^2\big(S,\cF\big)=h^0\big(S,\cF(-h_S)\big)=0$, the cohomology of Sequence \eqref{seqUlrich} and the vanishing $h^0\big(S,\cI_{Z\vert S}(h_S)\big)=0$ yield the exact sequence
$$
0\longrightarrow H^1\big(S,\cF\big)\longrightarrow H^1\big(S,\cI_{Z\vert S}(h_S)\big)\longrightarrow H^2\big(S,\cO_S(K_S)\big)\longrightarrow0,
$$
hence $h^1\big(S,\cF\big)= h^1\big(S,\cI_{Z\vert S}(h_S)\big)- h^2\big(S,\cO_S(K_S)\big)$.
Since  $h^2\big(S,\cO_S(K_S)\big)=1$, it follows that $h^1\big(S,\cF\big)=0$, thanks to Equality \eqref{Extension}. 
\end{proof}

In the next proposition we deal with the properties of the point corresponding to $\cF$ in the moduli spaces $\Spl_S(2;c_1(\cF),c_2(\cF))$ and $M_S(2;c_1(\cF),c_2(\cF))$.

To this purpose, we denote by $\mathcal Z_0$ the open subset of $\mathcal Z$ of points corresponding to schemes $Z$ such that $h^0\big(S,\cI_{Z\vert S}(K_S)\big)=0$. Trivially, $\mathcal Z_0\ne\emptyset$ if $N+2\ge p_g(S)$. It is immediate to check that such an inequality is equivalent to $h_S^2+4\ge h_SK_S$ if $h^0\big(S,\cO_S(K_S-h_S)\big)=0$.

Finally, let $\mathcal Z_1:=S_0^{[N+2]}\cap\mathcal Z$. As pointed out in Remark \ref{rS_0}, $\mathcal Z_1$ could be empty, but if $k$ is uncountable and $\kappa(S)\ge0$, then it is certainly dense inside $S^{[N+2]}$.

\begin{proposition}
\label{pStable}
Let $S$ be a surface with $q(S)=0$ and $p_g(S)\ge1$, endowed with a non--special very ample line bundle $\cO_S(h_S)$. Assume  $h^0\big(S,\cO_S(K_S-h_S)\big)=0$.

Then the following properties hold for the vector bundle $\cF$ obtained from a scheme $Z\in \mathcal Z$ as in Construction \eqref{conE}
\begin{enumerate}
\item $\cF$ is simple.
\item $p_g(S)\le \dim\Ext_S^2\big(\cF,\cF\big)\le p_g(S)+h^0\big(S,\cO_S(2K_S-h_S)\big)$ if $h_S^2+4\ge h_SK_S$ and $Z\in \mathcal Z_0$.
\item $\cF$ is $\mu$--stable if $h_S^2> h_SK_S$ and $Z\in \mathcal Z_1$.
\end{enumerate} 
\end{proposition}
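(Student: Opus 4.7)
\emph{Proof plan.} My strategy is to handle part~(2) first through a direct cohomological computation with the defining sequence~\eqref{seqUlrich} of $\cF$, and then to treat part~(1) via a case analysis that exploits the genericity of $Z$ from Construction~\ref{conE}.

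For part~(2), Serre duality~\eqref{Serre} identifies $\Ext_S^2(\cF,\cF)$ with $\Hom_S(\cF,\cF(K_S))^\vee$, and the lower bound $p_g(S)\le\dim\Ext_S^2(\cF,\cF)$ is immediate from~\eqref{Mukai}. To obtain the upper bound I twist~\eqref{seqUlrich} by $\cO_S(K_S)$ and apply $\Hom_S(\cF,-)$; this places $\Hom_S(\cF,\cF(K_S))$ between $\Hom_S(\cF,\cO_S(2K_S))$ and $\Hom_S(\cF,\cI_{Z\vert S}(h_S+K_S))$. Since $\cF$ has rank~$2$ and determinant $\cO_S(h_S+K_S)$, the first group equals $h^0(S,\cF(K_S-h_S))$; twisting~\eqref{seqUlrich} further by $\cO_S(K_S-h_S)$ and using $h^0(\cI_{Z\vert S}(K_S))=0$ (from $Z\in\cU_1$) reduces this to $h^0(S,\cO_S(2K_S-h_S))$. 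For the second group, applying $\Hom_S(-,\cI_{Z\vert S}(h_S+K_S))$ to~\eqref{seqUlrich} and using $\sHom_S(\cI_{Z\vert S},\cI_{Z\vert S})\cong\cO_S$ together with $h^0(\cI_{Z\vert S}(h_S))=0$ yields $p_g(S)$. Summing the two gives the claimed upper bound.

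For part~(1), I argue by contradiction: suppose $\mathcal L=\cO_S(A)\subseteq\cF$ is a saturated line subbundle with $A\cdot h_S\ge\mu(\cF)=(h_S^2+h_SK_S)/2$, and analyse the composition $\mathcal L\to\cF\to\cI_{Z\vert S}(h_S)$. If it vanishes, then $\mathcal L$ factors through $\cO_S(K_S)$, so $A\cdot h_S\le K_S\cdot h_S<\mu(\cF)$ by $h_S^2>h_SK_S$, a contradiction. Otherwise the composition is injective and produces an effective class $M:=h_S-A$ together with a divisor $D\in|M|$ containing $Z$; when $A$ is itself effective, adjoining any element of $|A|$ to $D$ yields a hyperplane section through $Z$, contradicting the general linear position built into $Z\in\cU_2$.

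The remaining subcase, in which $A$ is not effective while $M=h_S-A$ is effective and $M\cdot h_S\le(h_S^2-h_SK_S)/2$, is the technical core of the proof and where I expect the main obstacle to lie. Because $h_S$ is ample, only finitely many effective classes $M$ meet this numerical bound, and for each such class the locus $\{Z'\in S^{[N+2]}\colon Z'\subseteq D'\text{ for some }D'\in|M|\}$ has dimension at most $\dim|M|+(N+2)$. A careful application of Riemann--Roch~\eqref{RRGeneral} and Serre duality, in light of the hypotheses on $h_S$, should give $h^0(\cO_S(M))\le N+2$ for each such class, so this incidence locus has dimension strictly less than $\dim S^{[N+2]}=2(N+2)$; the generic $Z\in\mathcal Z$ therefore avoids them all simultaneously. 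Verifying the uniform estimate $h^0(\cO_S(M))\le N+2$ over the finite list of candidate classes $M$ is the delicate step of the argument.
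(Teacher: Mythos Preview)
Your treatment of part~(2) is correct and essentially matches the paper's: both bound $\dim\Hom_S(\cF,\cF(K_S))$ by tensoring Sequence~\eqref{seqUlrich} with $\cF^\vee(K_S)\cong\cF(-h_S)$ and using $h^0(\cI_{Z\vert S}(K_S))=0$ together with $h^0(\cF)=p_g(S)$ from Lemma~\ref{lCohomology}. Your organisation via $\Hom_S(\cF,-)$ and $\Hom_S(-,\cI_{Z\vert S}(h_S+K_S))$ is a harmless repackaging of the same computation.

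Part~(1) has a genuine gap. Your initial dichotomy and the case where the composition vanishes agree with the paper. But in the non-vanishing case you split further into ``$A$ effective'' versus ``$A$ not effective'', and the second subcase is where your plan breaks down. First, the claim that only \emph{finitely} many effective classes $M$ satisfy $M\cdot h_S\le(h_S^2-h_SK_S)/2$ is false in general: a $K3$ surface, for instance, may carry infinitely many $(-2)$-curves of bounded degree. Since $q(S)=0$ one has at most countably many such classes, which would still be enough over an uncountable field \emph{provided} each incidence locus is proper. Second, and more seriously, the uniform bound $h^0(\cO_S(M))\le N+2$ that your dimension count requires is never established, and there is no evident mechanism for it: Riemann--Roch gives no control over $h^1(\cO_S(M))$ for an arbitrary effective $M$ of low degree, and the additional hypothesis $h^0(\cO_S(h_S-M))=0$ does not obviously help. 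You yourself flag this as ``the delicate step'', but it is not clear that it can be completed along these lines.

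The paper's route is quite different and sidesteps this difficulty entirely. It does not restrict $Z$ further; instead it works with \emph{any} $Z\in\mathcal Z$ and exploits the condition $Z\subseteq S_0$, which your plan never uses. Because no point of $Z$ lies on a smooth rational curve, every integral component of the effective divisor $A\in|h_S-E|$ passing through $Z$ has arithmetic genus at least~$1$. Writing $B$ for the union of those components that meet $Z$, one gets $p_a(B)\ge1$. Combined with the vanishing $h^0(\cO_S(K_S-E))=0$ (an immediate consequence of the destabilising inequality and $h_S^2>h_SK_S$), this forces $h^1(B,\cO_B(h_B))=0$, so Riemann--Roch on the curve $B$ gives $h^0(B,\cO_B(h_B))\le Bh_S$. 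But $Z\subseteq B$ spans $\bP^N$, hence $Bh_S\ge N+1$, whence $Ah_S\ge N+1$; this contradicts $Ah_S\le(h_S^2-h_SK_S)/2$ directly via Equality~\eqref{Dimension}. The hypothesis $Z\subseteq S_0$ is thus the key ingredient, and precisely the one your outline omits.
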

\begin{proof}
In order to prove assertion (1), applying $\Hom_S\big(\cF,-\big)$ to Sequences \eqref{seqStandard} tensored by $\cO_S(h_S)$ and \eqref{seqUlrich}, taking into account of Lemma \ref{lCohomology} we obtain
\begin{align*}
\Hom_S(\cF,\cF\big)\subseteq\Hom_S(\cF,\cI_{Z\vert S}(h_S)\big)\subseteq\Hom_S(\cF,\cO_S(h_S)\big)\cong H^0\big(S,\cF(-K_S)\big).
\end{align*}
Tensoring the  cohomology of Sequence \eqref{seqUlrich} by $\cO_S(-K_S)$ we obtain
$$
h^0\big(S,\cF(-K_S)\big)\le1+h^0\big(S,\cI_{Z\vert S}(h_S-K_S)\big).
$$
The choice of $Z$ and the hypothesis $p_g(S)\ge1$ imply
$$
h^0\big(S,\cI_{Z\vert S}(h_S-K_S)\big)\le h^0\big(S,\cI_{Z\vert S}(h_S)\big)=0,
$$
hence $\cF$ is simple.

Let us prove assertion (2). The choice of $Z$ implies $h^0\big(S,\cI_{Z\vert S}(K_S)\big)=0$, then the cohomology of Sequence \eqref{seqUlrich} tensored by $\cO_S(K_S-h_S)$ yields 
$$
h^0\big(S,\cF(K_S-h_S)\big)=h^0\big(S,\cO_S(2K_S-h_S)\big).
$$
The cohomology of the same exact sequence tensored by $\cF(-h_S)\cong\cF^\vee(K_S)$ returns 
\begin{align*}
\dim \Ext_S^2\big(\cF,\cF\big)&=\rh^0\big(S,\cF\otimes\cF^\vee(K_S)\big)\le \\
&\le \rh^0\big(S,\cF\otimes\cI_{Z\vert S}\big)+h^0\big(S,\cO_S(2K_S-h_S)\big).
\end{align*}
The obvious inclusion $\cF\otimes\cI_{Z\vert S}\subseteq\cF$ yields $h^0\big(S,\cF\otimes\cI_{Z\vert S}\big)\le p_g(S)$ thanks to  Lemma \ref{lCohomology}, hence
\begin{align*}
\dim \Ext_S^2\big(\cF,\cF\big)\le p_g(S)+h^0\big(S,\cO_S(2K_S-h_S)\big).
\end{align*}
Assertion (2) follows by combining Inequality \eqref{Mukai} with the above inequality. 

Let us prove assertion (3). Thanks to \cite[Theorem II.1.2.2]{O--S--S}, if $\cF$ is not $\mu$--stable, then there should exist a sheaf $\mathcal M\subseteq\cF$ of rank $1$ such that $\cF/\cM$ is torsion--free and 
$$
\mu(\mathcal M)\ge\mu(\cF)=\frac{h_S^2+h_SK_S}2.
$$

The sheaf $\cM$ is trivially torsion--free and it is also normal (see \cite[Lemma II.1.1.16]{O--S--S}). Thus it is a line bundle, because it has rank $1$ (see \cite[Lemmas II.1.1.12 and II.1.1.15]{O--S--S}). It follows that $\cM\cong\cO_S(E)$ for some divisor $E$ on $S$.

We have
$$
(K_S-E)h_S\le \frac{h_SK_S-h_S^2}2<0.
$$
The Nakai criterion then  implies
\begin{equation}
\label{VanishingA}
h^0\big(S,\cO_S(K_S-E)\big)=0,
\end{equation}
hence $\cO_S(E)$ is not contained in the kernel $\cK\cong\cO_S(K_S)$ of the map $\cF\to\cI_{Z\vert S}(h_S)$ in Sequence \eqref{seqUlrich}
and the composition $\cO_S(E)\subseteq\cF\to\cI_{Z\vert S}(h_S)$ is necessarily non--zero. In particular $\rh^0\big(S,\cI_{Z\vert S}(h_S-E)\big)\ge1$, hence there is $A\in \vert h_S-E\vert$ through $Z$. 

We claim that $Ah_S\ge N+1$. Assuming the claim, Equality \eqref{Dimension} yields
$$
\frac{h_S^2-h_SK_S}2+p_g(S)+1\le Ah_S=(h_S-E)h_S\le\frac{h_S^2-h_SK_S}2,
$$
a contradiction. We deduce that a sheaf $\cM$ as above does not exist, hence $\cF$ is $\mu$--stable.

It remains to prove the claim. To this purpose, let $C_1,\dots, C_s$ be the integral components of $A$ intersecting $Z$ and $B$ their union. Since $Z\subseteq S_0$, it follows that $p_a(C_i)\ge1$.
Moreover, it is well known that $p_a(C_i\cup C_j)=p_a(C_i)+p_a(C_j)+C_iC_j-1$ for each $i, j\in\{\ 1,\dots, s\ \}$, $i\ne j$ (e.g. see \cite[Exercise V.1.3 (c)]{Ha2}): by combining these remarks with an easy induction on $s$ we then deduce 
\begin{equation}
\label{Positive}
p_a(B)\ge\sum_{i=1}^sp_a(C_i)-s+1\ge1.
\end{equation}

On the one hand, the cohomology of 
$$
0\longrightarrow\cO_S(h_S-B)\longrightarrow\cO_S(h_S)\longrightarrow\cO_B(h_B)\longrightarrow0
$$
yields $h^1\big(B,\cO_B(h_B)\big)\le h^2\big(S,\cO_S(h_S-B)\big)$. Vanishing \eqref{VanishingA} then implies
\begin{align*}
h^2\big(S,\cO_S(h_S-B)\big)&=h^0\big(S,\cO_S(B-h_S+K_S)\big)\le\\
&\le h^0\big(S,\cO_S(A-h_S+K_S)\big)=h^0\big(S,\cO_S(K_S-E)\big)=0,
\end{align*}
hence $h^1\big(B,\cO_B(h_B)\big)=0$. Thus the Riemann--Roch theorem for the curve $B$ and Inequality \eqref{Positive} above yield $h^0\big(B,\cO_B(h_B)\big)\le Bh_S$. 
On the other hand, the curve $B$ is not contained in any hyperplane inside $\p N$, because it contains $Z$, hence $h^0\big(B,\cO_B(h_B)\big)\ge N+1$. 

We then deduce that $Ah_S\ge Bh_S\ge N+1$, hence the claim is proved and the proof of assertion (3) is complete.
\end{proof}

\begin{remark}
Construction \ref{conE} makes sense also in the case $p_g(S)=0$. Indeed it is the method we used in \cite{Cs4,Cs4erratum} for proving the existence of special Ulrich bundles when $p_g(S)=q(S)=0$. Notice that in this case the inequality $h^2_S> h_SK_S$ is for free. 

On the one hand, the proof of Lemma \ref{lCohomology} can be carried over word by word to this case. On the other hand, the proof of assertion (1) of Proposition \ref{pStable} cannot be extended to the case $p_g(S)=0$. Moreover, the proof of assertion (3) is alternative to \cite[Theorem 1.2]{Cs4,Cs4erratum} when $\mathcal Z_1\ne\emptyset$: in particular, it certainly needs $\kappa(S)\ge0$. 

One of the hypothesis of \cite[Theorem 1.2]{Cs4,Cs4erratum} is that $k$ is uncountable. Thus, the above proof and \cite{L--M} extend such a result when $\kappa(S)=2$ also to the case of a countable base field. When $p_g(S)=0$ and $\kappa(S)\le1$, as in the case $p_g(S)\ge1$, the condition $\mathcal Z_1\ne\emptyset$ is not immediate: e.g. there exist Enriques surfaces containing infinitely many rational curves (see \cite{C--D}). 
\end{remark}

\section{The inductive step}
\label{sStep}
In this section we explain the inductive step of the proof of Theorem \ref{tMain}.

\begin{construction}
\label{conEeta}
Let $S$ be a surface endowed with a very ample line bundle $\cO_S(h_S)$. Let $\cF$ be a vector  bundle of rank $2$ such that $h^0\big(S,\cF\big)\ge1$. Thus for each point $p\in S$, there exist non--zero morphisms $\varphi\colon \cF\to\cO_p$.

Each $\varphi$ as above is surjective and we have the exact sequence
\begin{equation}
\label{seqDeform}
0\longrightarrow\cF_\varphi\longrightarrow\cF\mapright{\varphi}\cO_p\longrightarrow0,
\end{equation}
where $\cF_\varphi:=\ker(\varphi)$. Notice that
$$
c_1(\cF_\varphi)=c_1(\cF),\qquad c_2(\cF_\varphi)=c_2(\cF)+1,
$$
because $c_1(\cO_p)=0$ and $c_2(\cO_p)=-p$: see \cite[Example 15.3.1]{Fu}.
\end{construction}

\begin{lemma}
\label{lCohomologyEta}
Let $S$ be a surface endowed with a very ample line bundle $\cO_S(h_S)$. Assume that $\cF$ is a vector bundle of rank $2$ such that  $h^0\big(S,\cF(-h_S)\big)=0$ and $h^0\big(S,\cF\big)\ge1$.

If $p\in S$ and $\varphi\in \Hom_S\big(\cF,\cO_p\big)$ are general, then 
$$
h^0\big(S,\cF_\varphi\big)=h^0\big(S,\cF\big)-1,\qquad h^0\big(S,\cF_\varphi(-h_S)\big)=0,\qquad h^1\big(S,\cF_\varphi\big)=h^1\big(S,\cF\big)
$$
for the sheaf $\cF_\varphi$ obtained from $\cF$ as in Construction \eqref{conEeta}.
\end{lemma}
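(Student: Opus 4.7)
The plan is to take cohomology of Sequence \eqref{seqDeform} and of its twist by $\cO_S(-h_S)$. Since $\cO_p$ is a skyscraper sheaf at a single point, $h^0\big(S,\cO_p\big)=1$ while all higher cohomology vanishes, and $\cO_p\otimes\cO_S(-h_S)\cong\cO_p$. Thus the three claimed equalities will fall out as soon as the induced map $H^0\big(S,\cF\big)\to H^0\big(S,\cO_p\big)$ is arranged to be surjective, and the induced map $H^0\big(S,\cF(-h_S)\big)\to H^0\big(S,\cO_p\big)$ has trivial source.

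For the first condition, fix a nonzero global section $s\in H^0\big(S,\cF\big)$, which exists because $h^0\big(S,\cF\big)\ge1$. Its zero scheme is a proper closed subscheme of $S$, so a general $p\in S$ satisfies $s(p)\ne0$ in $\cF\otimes k(p)$. Identifying $\Hom_S\big(\cF,\cO_p\big)\cong (\cF\otimes k(p))^{\vee}\cong k^2$, a general $\varphi$ lies outside the hyperplane cut out by $\varphi(s(p))=0$, so the composite $H^0\big(S,\cF\big)\to\cF\otimes k(p)\xrightarrow{\varphi} k$ is surjective. The long exact sequence
$$
0\to H^0\big(S,\cF_\varphi\big)\to H^0\big(S,\cF\big)\to H^0\big(S,\cO_p\big)\to H^1\big(S,\cF_\varphi\big)\to H^1\big(S,\cF\big)\to 0
$$
then immediately delivers $h^0\big(S,\cF_\varphi\big)=h^0\big(S,\cF\big)-1$ and $h^1\big(S,\cF_\varphi\big)=h^1\big(S,\cF\big)$.

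For the remaining vanishing, tensoring Sequence \eqref{seqDeform} with $\cO_S(-h_S)$ produces $0\to\cF_\varphi(-h_S)\to\cF(-h_S)\to\cO_p\to 0$, whence an injection $H^0\big(S,\cF_\varphi(-h_S)\big)\hookrightarrow H^0\big(S,\cF(-h_S)\big)$. The hypothesis $h^0\big(S,\cF(-h_S)\big)=0$ built into Construction \ref{conEeta} then closes this case with no additional genericity needed.

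The argument is essentially routine; the only point requiring care is that the two open conditions---$p$ outside the zero scheme of $s$ in $S$, and $\varphi$ outside a hyperplane in $(\cF\otimes k(p))^{\vee}$---be imposed simultaneously. Since both bad loci are proper Zariski--closed subsets, a common general choice plainly exists, and this simple surjectivity trick is really the only substantive step in the proof.
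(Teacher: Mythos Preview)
Your proof is correct, and in fact more streamlined than the paper's. Both arguments handle the vanishing $h^0\big(S,\cF_\varphi(-h_S)\big)=0$ identically, by invoking the hypothesis $h^0\big(S,\cF(-h_S)\big)=0$ built into Construction~\ref{conEeta}. The difference lies in how the surjectivity of $\overline{\varphi}\colon H^0\big(S,\cF\big)\to k$ is established. The paper first observes that surjectivity is an open condition, hence it suffices to exhibit a single pair $(p,\varphi)$ for which it holds; it then does so by decomposing the zero locus of a nonzero section $s$ as a divisor $E$ plus a $0$--dimensional scheme $X$, using a local--to--global spectral sequence to show $\Ext^1_S\big(\cI_{X\vert S}(h_S+K_S-E),\cO_p\big)=0$ for $p\notin X\cup E$, lifting the sequence $0\to\cI_{p\vert S}(E)\to\cO_S(E)\to\cO_p\to0$ to a commutative diagram via the Snake lemma, and reading off the desired surjectivity from the left column. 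Your route bypasses all of this: once $s(p)\ne0$ in the fibre $\cF\otimes k(p)$, a general functional $\varphi\in(\cF\otimes k(p))^\vee$ does not kill $s(p)$, and that is already the surjectivity of $\overline{\varphi}$. What your approach buys is brevity and the avoidance of any Ext machinery; what the paper's approach buys is an explicit description of a particular $\cF_\varphi$ as an extension of $\cI_{X\vert S}(h_S+K_S-E)$ by $\cI_{p\vert S}(E)$, which is not needed here but could be of independent use.
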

\begin{proof}
Trivially $h^0\big(S,\cF_\varphi(-h_S)\big)\le h^0\big(S,\cF(-h_S)\big)=0$. The equality $h^0\big(S,\cF_\varphi\big)=h^0\big(S,\cF\big)-1$ is equivalent to the surjectivity of the map $\overline{\varphi}\colon \rH^0\big(S,\cF\big)\to k$ induced by $\varphi$. The fact that $\overline{\varphi}$ is surjective for general $p\in S$ and $\varphi\in \Hom_S\big(\cF,\cO_p\big)$ is an open property, thus it suffices to check the existence of at least one $p$ and one $\varphi$ with such a property.

To this purpose there exists a non zero $s\in H^0\big(S,\cF\big)$. Its zero locus is the union of a 0--dimensional scheme $X$ and a divisor $E$ on $S$. In particular we have an exact sequence of the form
$$
0\longrightarrow \cO_S(E)\mapright\varepsilon \cF\longrightarrow \cI_{X\vert S}(c_1(\cF)-E)\longrightarrow0,
$$

Let now $p\not\in X\cup E$. Applying $\Hom_S\big(\cI_{X\vert S}(c_1(\cF)-E),-\big)$ to the exact sequence
\begin{equation}
\label{seqGood}
0\longrightarrow \cI_{p\vert S}(E)\longrightarrow \cO_S(E)\longrightarrow \cO_p\longrightarrow0,
\end{equation}
we obtain the exact sequence
\begin{align*}
\Ext_S^1\big(\cI_{X\vert S}(c_1(\cF)-E),\cI_{p\vert S}(E)\big)&\mapright\delta\Ext_S^1\big(\cI_{X\vert S}(c_1(\cF)-E),\cO_S(E)\big)\longrightarrow\\
&\longrightarrow\Ext_S^1\big(\cI_{X\vert S}(c_1(\cF)-E),\cO_p\big).
\end{align*}

We have
\begin{gather*}
\sHom_S\big(\cI_{X\vert S}(c_1(\cF)-E),\cO_p\big)\cong\cO_p,\\
 \sExt_S^1\big(\cI_{X\vert S}(c_1(\cF)-E),\cO_p\big)=0
\end{gather*}
because we compute them locally on $X$ and $p\not\in X\cup E$. The exact sequence of the low degree terms of the spectral sequence
$$
E^{i,j}_2:=H^i\big(S,\sExt_S^j\big(\cI_{X\vert S}(c_1(\cF)-E),\cO_p\big)\big)
$$
then yields $\Ext_S^1\big(\cI_{X\vert S}(c_1(\cF)-E),\cO_p\big)=0$. It follows that the map $\delta$ above is surjective, hence we can lift Sequence \eqref{seqDeform} to another exact sequence obtaining a commutative diagram of the form
\begin{equation*}
\begin{CD}
@. 0@.   0\\
@.@VVV  @VVV@.\\
0@>>> \cI_{p\vert S}(E)@>>>   \cF_\varphi@>>> \cI_{X\vert S}(c_1(\cF)-E)@>>>0\\
@.@VVV  @VVV@|\\
0@>>> \cO_S(E)@>>>   \cF@>>>\cI_{X\vert S}(c_1(\cF)-E)@>>>0\\
@.@VV\epsilon V  @VV\varphi V@.\\
@. \cO_p@=   \cO_p\\
@.@VVV  @VVV@.\\
@. 0@.   0
\end{CD}
\end{equation*}

Since $p\not\in E$, it follows that $h^0\big(S,\cI_{p\vert S}(E)\big)=h^0\big(S,\cO_S(E)\big)-1$, hence the map $\overline{\epsilon}\colon H^0\big(S,\cO_S(E)\big)\to k$ induced by $\epsilon$ is surjective. The commutativity of the diagram then implies that the same is true for $\overline{\varphi}$. Thus $h^0\big(S,\cF_\varphi\big)=h^0\big(S,\cF\big)-1$ and $h^1\big(S,\cF_\varphi\big)=h^1\big(S,\cF\big)$ for  $\varphi$ general enough. 
\end{proof}

\begin{proposition}
\label{pStableEta}
Let $S$ be a surface endowed with a very ample line bundle $\cO_S(h_S)$. Assume that $\cF$ is a vector bundle of rank $2$ with $h^0\big(S,\cF\big)\ge1$.

Then the following properties hold for a sheaf $\cF_\varphi$ obtained from $\cF$ as in \eqref{conEeta}
\begin{enumerate}
\item $\cF_\varphi$ is simple if the same is true for $\cF$.
\item $p_g(S)\le \dim\Ext_S^2\big(\cF_\varphi,\cF_\varphi\big)\le \dim\Ext_S^2\big(\cF,\cF\big)$.
\item $\cF_\varphi$ is $\mu$--stable if the same is true for $\cF$.
\end{enumerate}
\end{proposition}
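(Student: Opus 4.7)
The plan is to handle the two assertions separately, both exploiting Sequence \eqref{seqDeform}. For assertion (1), the key observation is that $\cF_\varphi$ is a subsheaf of $\cF$ having the same rank and, since the quotient $\cO_p$ is zero-dimensional, the same first Chern class. Consequently $\mu(\cF_\varphi)=\mu(\cF)$, and any rank-$1$ subsheaf $\cM\subseteq\cF_\varphi$ is automatically a rank-$1$ subsheaf of $\cF$. The $\mu$-stability of $\cF$ then forces $\mu(\cM)<\mu(\cF)=\mu(\cF_\varphi)$, which proves that $\cF_\varphi$ is $\mu$-stable.

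For assertion (2), the lower bound $p_g(S)\le\dim\Ext^2_S\bigl(\cF_\varphi,\cF_\varphi\bigr)$ follows immediately from Inequality \eqref{Mukai}, since $\cF_\varphi\subseteq\cF$ is torsion-free. The upper bound, via Serre duality \eqref{Serre}, amounts to showing
$$
\dim\Hom_S\bigl(\cF_\varphi,\cF_\varphi(K_S)\bigr)\le\dim\Hom_S\bigl(\cF,\cF(K_S)\bigr).
$$
The plan is to chain two long exact sequences. First I would twist Sequence \eqref{seqDeform} by $\cO_S(K_S)$ (noting $\cO_p\otimes\cO_S(K_S)\cong\cO_p$) and apply $\Hom_S(\cF_\varphi,-)$, obtaining the injection
$$
\Hom_S\bigl(\cF_\varphi,\cF_\varphi(K_S)\bigr)\hookrightarrow\Hom_S\bigl(\cF_\varphi,\cF(K_S)\bigr).
$$
Then I would apply $\Hom_S(-,\cF(K_S))$ to Sequence \eqref{seqDeform} itself, producing the four-term exact sequence
$$
0\to\Hom_S\bigl(\cO_p,\cF(K_S)\bigr)\to\Hom_S\bigl(\cF,\cF(K_S)\bigr)\to\Hom_S\bigl(\cF_\varphi,\cF(K_S)\bigr)\to\Ext^1_S\bigl(\cO_p,\cF(K_S)\bigr).
$$
The first term vanishes because $\cF(K_S)$ is torsion-free.

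The one technical step, and the main obstacle, is the vanishing $\Ext^1_S\bigl(\cO_p,\cF(K_S)\bigr)=0$. I would deduce it from the local-to-global spectral sequence applied to the local Ext sheaves $\sExt^j_S\bigl(\cO_p,\cF(K_S)\bigr)$. Since $\cF(K_S)$ is locally free and $\cO_{S,p}$ is regular of dimension $2$, a Koszul resolution of $\cO_p$ shows that these local Ext sheaves vanish for $j\ne2$ and are supported at $p$; this forces $\Ext^1_S\bigl(\cO_p,\cF(K_S)\bigr)=0$. The four-term sequence then collapses to an isomorphism $\Hom_S\bigl(\cF,\cF(K_S)\bigr)\cong\Hom_S\bigl(\cF_\varphi,\cF(K_S)\bigr)$, and combining with the earlier injection yields the desired inequality.
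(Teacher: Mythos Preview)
Your argument is correct and follows essentially the same route as the paper's proof: assertion~(1) is handled identically, the lower bound in~(2) comes from Inequality~\eqref{Mukai}, and the upper bound is obtained via the injection $\Hom_S(\cF_\varphi,\cF_\varphi(K_S))\hookrightarrow\Hom_S(\cF_\varphi,\cF(K_S))$ together with the identification $\Hom_S(\cF_\varphi,\cF(K_S))\cong\Hom_S(\cF,\cF(K_S))$. The only minor variation is in how that last isomorphism is established: the paper applies the covariant functor $\Hom_S(\cF,-)$ to Sequence~\eqref{seqDeform} and uses the trivial vanishing $h^1(S,\cF^\vee\otimes\cO_p)=0$ for a skyscraper, then invokes Serre duality; you instead apply the contravariant functor $\Hom_S(-,\cF(K_S))$ and compute $\Ext^1_S(\cO_p,\cF(K_S))=0$ via the local-to-global spectral sequence and a Koszul resolution --- the two vanishings are in fact Serre dual to one another, so the arguments are equivalent, the paper's version simply being slightly more direct.
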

\begin{proof}
In order to prove assertion (1), we notice that the quotient map $\cO_S\to\cO_p$ induces by duality an inclusion $0\ne \Hom_S\big(\cO_p,\cO_p\big)\subseteq\Hom_S\big(\cO_S,\cO_p\big)\cong k$, hence
\begin{equation}
\label{U1}
\Hom_S\big(\cO_p,\cO_p\big)\cong k.
\end{equation}
Equality \eqref{Serre} yields
$$
\dim\Ext_S^i\big(\cO_p,\cF\big)=h^{2-i}\big(S,\cF^\vee(K_S)\otimes\cO_p\big)={2\delta_{2,i}}.
$$
Thus the functor $\Hom_S\big(\cO_p,-\big)$ applied to Sequence \eqref{seqDeform} we obtain
\begin{equation}
\label{U2}
\Ext_S^1\big(\cO_p,\cF_\varphi\big)\cong \Hom_S\big(\cO_p,\cO_p\big). 
\end{equation}

On the one hand, each map in $\Hom_S\big(\cF,\cF_\varphi\big)$ induces a map in $\Hom_S\big(\cF,\cF\big)$ by composing with the inclusion $\cF_\varphi\subseteq\cF$: it follows that the composed map is never surjective. On the other hand, each non--zero element in $\Hom_S\big(\cF,\cF\big)$ must be a homothety because $\cF$ is simple, hence it is an isomorphism. We deduce that $\Hom_S\big(\cF,\cF_\varphi\big)=0$. 

Thus we obtain $\Hom_S\big(\cF_\varphi,\cF_\varphi\big)\subseteq \Ext_S^1\big(\cO_p,\cF_\varphi\big)$, by applying $\Hom_S(-,\cF_\varphi\big)$ to Sequence \eqref{seqDeform}. If we combine this inclusion with Equality \eqref{U1} and \eqref{U2}, we finally obtain that $\cF_\varphi$ is simple, i.e. assertion (1) is proven.

We prove assertion (2). Since $\cF$ is a vector bundle, the following obvious equalities
$$
h^0\big(S,\cF^\vee\otimes\cO_p\big)=2, \qquad h^1\big(S,\cF^\vee\otimes\cO_p\big)=h^2\big(S,\cF^\vee\otimes\cO_p\big)=0
$$
hold, hence $\Ext_S^2\big(\cF,\cF_\varphi\big)\cong\Ext_S^2\big(\cF,\cF\big)$, by applying $\Hom_S\big(\cF,-\big)$ to Sequence \eqref{seqDeform}.
Equality \eqref{Serre} and $\Hom_S\big(\cF_\varphi,-\big)$ applied to Sequence \eqref{seqDeform} tensored by $\cO_S(K_S)$ yield
\begin{equation*}
\label{inclusion}
\begin{aligned}
\Ext_S^2\big(\cF_\varphi,\cF_\varphi\big)^\vee\cong\Hom_S\big(\cF_\varphi,\cF_\varphi(K_S)\big)&\subseteq \Hom_S\big(\cF_\varphi,\cF(K_S)\big)\cong\\ &\cong\Ext_S^2\big(\cF,\cF_\varphi\big)^\vee\cong\Ext_S^2\big(\cF,\cF\big)^\vee.
\end{aligned}
\end{equation*}
The statement follows from the above inclusion and Inequality \eqref{Mukai}.

Consider now assertion (3). 
Since $\cF_\varphi\subseteq\cF$ and $\mu(\cF_\varphi)=\mu(\cF)$, it follows that each subsheaf destabilizing $\cF_\varphi$ also destabilizes $\cF$. Thus assertion (3) is proven.
\end{proof}

\section{The proof of Theorem \ref{tMain}}
\label{sMain}
In this section we put together the above results with the following classical theorem for proving Theorem \ref{tMain} stated in the introduction.

\begin{theorem}
\label{tAr}
Let $S$ be a surface, $p\in S$, $\cF$ a simple vector bundle of rank $r\ge2$ with $\Ext_S^2\big(\cF,\cF\big)=p_g(S)$, $\varphi\in\Hom_S\big(\cF,\cO_p\big)$ non--zero and $\cF_\varphi:=\ker(\varphi)$.

Then $\cF_\varphi$ has a universal deformation whose general sheaf is locally free at $p$.
\end{theorem}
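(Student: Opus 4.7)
The plan is to deduce the statement from Artamkin's criterion in \cite{Ar}; I sketch how its proof would proceed in three stages.

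\textbf{Step 1: existence of a universal deformation.} The hypothesis $\dim\Ext_S^2(\cF,\cF)=p_g(S)$ combined with Inequality \eqref{Mukai} forces the Mukai injection $H^0(S,\cO_S(K_S))\hookrightarrow\Ext_S^2(\cF,\cF)$ to be an isomorphism. By the trace pairing together with Serre duality \eqref{Serre}, this standardly implies that $\cF$ is simple, i.e. $\dim\Hom_S(\cF,\cF)=1$. Applying $\Hom_S(-,\cF_\varphi)$ to Sequence \eqref{seqDeform} and using the vanishing $\Hom_S(\cO_p,\cF_\varphi)=0$ (since $\cF_\varphi$ is torsion-free) then yields $\Hom_S(\cF_\varphi,\cF_\varphi)=k$. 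By Schlessinger's criterion, the simple torsion-free sheaf $\cF_\varphi$ admits a universal formal deformation $(T,\mathcal{F}_{\mathrm{univ}})$ with tangent space $\Ext_S^1(\cF_\varphi,\cF_\varphi)$ and obstructions taking values in $\Ext_S^2(\cF_\varphi,\cF_\varphi)$.

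\textbf{Step 2: smoothness of $T$.} In characteristic zero the trace map splits the sequence
$$
0\longrightarrow\Ext_S^2(\cF_\varphi,\cF_\varphi)_0\longrightarrow\Ext_S^2(\cF_\varphi,\cF_\varphi)\longrightarrow H^2(S,\cO_S)\longrightarrow 0,
$$
and obstructions to deformations of a torsion-free sheaf on a smooth surface lie in the traceless summand $\Ext_S^2(\cF_\varphi,\cF_\varphi)_0$. A direct comparison using \eqref{seqDeform} together with the vanishings of $\Ext_S^i(\cO_p,\cF)$ for $i<2$ and of $\Ext_S^i(\cF,\cO_p)$ for $i>0$ (which follow from $\cF$ being locally free) yields $\dim\Ext_S^2(\cF_\varphi,\cF_\varphi)\leq\dim\Ext_S^2(\cF,\cF)=p_g(S)$. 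Combined with the reverse bound coming from Inequality \eqref{Mukai} applied to $\cF_\varphi$, one gets equality and hence $\Ext_S^2(\cF_\varphi,\cF_\varphi)_0=0$. Therefore $T$ is smooth of dimension $\dim\Ext_S^1(\cF_\varphi,\cF_\varphi)$.

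\textbf{Step 3: the generic deformation is locally free at $p$.} Let $W\subseteq T$ denote the locus of deformations whose fibre is not locally free at $p$; equivalently, those along which the stalk $\sExt_S^1(\cF_\varphi,\cF_\varphi)_p$ fails to vanish. To conclude I would produce a first-order deformation $\xi\in\Ext_S^1(\cF_\varphi,\cF_\varphi)$ lying outside the Zariski tangent space of $W$ at the origin. Such a $\xi$ is obtained naturally from the extension class of \eqref{seqDeform} and from the connecting morphism in the long exact sequence $\dots\to\Hom_S(\cO_p,\cO_p)\to\Ext_S^1(\cF_\varphi,\cO_p)\to\dots$: it represents an infinitesimal motion of the kernel that smooths the singularity at $p$. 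Combined with the smoothness of $T$ proved in Step 2, the existence of $\xi$ implies $W\subsetneq T$, so the general point of the universal family parametrizes a sheaf locally free at $p$.

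\textbf{Main obstacle.} The delicate point is Step 3: smoothness of $T$ alone does not imply density of locally-free deformations, and one must identify a concrete transverse direction. This requires a careful local-to-global analysis of $\sExt_S^\bullet(\cF_\varphi,\cF_\varphi)$ around $p$ and an explicit interpretation of the connecting map of \eqref{seqDeform} at the level of first-order deformations, which constitutes the technical heart of Artamkin's argument in \cite{Ar}.
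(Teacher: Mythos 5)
The paper does not actually prove this theorem: its entire proof is the citation ``See \cite[Theorem 1.4 and Corollary 1.5]{Ar}''. Your overall plan --- reduce the statement to Artamkin --- therefore coincides with what the paper does, and the step you candidly flag as missing (Step 3, producing a deformation direction transverse to the non-locally-free locus) is precisely the imported content: Artamkin's Theorem 1.4 is a criterion for the generic sheaf of the universal deformation of $\ker(\varphi)$ to be locally free at $p$, and Corollary 1.5 verifies that criterion under the hypothesis $\dim\Ext_S^2\big(\cF,\cF\big)=p_g(S)$. So judged as a proof-by-reference your proposal matches the paper; judged as a self-contained argument it is incomplete exactly where you say it is. Your Step 2 is sound and is in fact close in spirit to the paper's own Proposition \ref{pStableEta}(2), which proves $\dim\Ext_S^2\big(\cF_\varphi,\cF_\varphi\big)\le\dim\Ext_S^2\big(\cF,\cF\big)$ by a comparison along Sequence \eqref{seqDeform} together with Serre duality \eqref{Serre}.

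There is, however, a genuine error in Step 1. The implication ``$\dim\Ext_S^2\big(\cF,\cF\big)=p_g(S)$ forces $\cF$ simple'' fails when $p_g(S)=0$, a case the statement allows: the hypothesis then reads $\Ext_S^2\big(\cF,\cF\big)=0$, which is satisfied by the non-simple bundle $\cF=\cO_S\oplus\cO_S$ on any surface with $p_g(S)=0$. Hence $\Hom_S\big(\cF_\varphi,\cF_\varphi\big)=k$ need not hold, and the Schlessinger/pro-representability route to a genuinely \emph{universal} deformation breaks down; Artamkin's ``universal deformation'' is the (semi)versal deformation of an arbitrary torsion-free sheaf and does not pass through simplicity. (In the paper's application this gap does not bite, since Theorem \ref{tAr} is invoked in the proof of Theorem \ref{tMain} only when $p_g(S)\ge1$, but the theorem as stated includes $p_g(S)=0$.) Even for $p_g(S)\ge1$, where simplicity does follow, your derivation is too quick twice over: (i) from $\phi\otimes s=\mathrm{id}\otimes s'$, for a nonzero $s\in H^0\big(S,\cO_S(K_S)\big)$, one must still argue that $\phi$ is multiplication by the rational function $s'/s$, which is regular because $\phi$ is, hence constant; and (ii) applying $\Hom_S\big(-,\cF_\varphi\big)$ to Sequence \eqref{seqDeform} only bounds $\Hom_S\big(\cF_\varphi,\cF_\varphi\big)$ by $\Ext_S^1\big(\cO_p,\cF_\varphi\big)$, which need not vanish since $\cF_\varphi$ is not locally free at $p$; the correct move is to map into $\cF$ instead, using $\Ext_S^1\big(\cO_p,\cF\big)=0$ for the locally free $\cF$ to get $\Hom_S\big(\cF_\varphi,\cF\big)\cong\Hom_S\big(\cF,\cF\big)$, or equivalently to extend endomorphisms of $\cF_\varphi$ to its double dual $\cF_\varphi^{\vee\vee}\cong\cF$. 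None of this affects the paper, which uses the theorem as a black box; but as a reconstruction of Artamkin's proof, your Step 1 is wrong in general and Step 3 is absent.
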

\begin{proof}
See \cite[Theorem 1.4 and Corollary 1.5]{Ar}. The simplicity hypothesis is mentioned in \cite[p. 450]{Ar}.
\end{proof}

As pointed out in the Introduction, if  $\cE$ is a vector bundle on $S$ such that $c_1(\cE)=3h_S+K_S$, then Equality \eqref{Serre} implies that $\cE$ is Ulrich if and only if
$$
h^0\big(S,\cE(-h_S)\big)=h^1\big(S,\cE(-h_S)\big)=0.
$$
Let $\cF$ be the bundle defined in Construction \ref{conE}. If $p_g(S)\ge1$, then $\cE:=\cF(h_S)$ satisfies all the conditions for being a special Ulrich bundle, but the vanishing $h^0\big(S,\cE(-h_S)\big)=h^0\big(S,\cF\big)=0$. Nevertheless, $\cF$ can be viewed as a good approximation of a special Ulrich bundle. For this reason we introduce the following definition. 

\begin{definition}
Let $S$ be a surface endowed with a very ample line bundle $\cO_S(h_S)$. A torsion--free coherent sheaf $\cF$  of rank $2$ on $S$ is called {\sl $d$--good} if $c_1(\cF)=c_1:=h_S+K_S$, $h^0\big(S,\cF\big)=d$ and  $h^1\big(S,\cF\big)=h^0\big(S,\cF(-h_S)\big)=0$.
\end{definition}

As explained above, if $\cF$ is a  $0$--good bundle, then $\cE:=\cF(h_S)$ is a special Ulrich bundle of rank $2$. 

If $q(S)=0$, then the bundle $\cF$ obtained from a scheme $Z$ as in Construction \ref{conE} is $p_g(S)$--good by Lemma \ref{lCohomology}: in particular if $p_g(S)=0$, then $\cF(h_S)$ is a special Ulrich bundle.

\begin{remark}
\label{rGood}
Notice that, thanks to Equality \eqref{RRGeneral}, a vector bundle $\cF$ of rank $2$ is $d$--good if and only if 
$$
 c_1(\cF)=c_1,\qquad c_2(\cF)=c_2^{(d)}:=\frac{h_S^2+h_SK_S}2+2\chi(\cO_S)-d
$$
 and  $h^1\big(S,\cF\big)=h^0\big(S,\cF(-h_S)\big)=0$, because $h^2\big(S,\cF\big)=h^0\big(S,\cF(-h_S)\big)$.
\end{remark}

We are now able to prove Theorems \ref{tMain} stated in the introduction.

\medbreak
\noindent{\it Proof of Theorem \ref{tMain}.}
We will prove the statement by descending induction, showing for each $d=p_g(S),\dots,0$ the existence of a simple (resp. $\mu$--stable) $d$--good bundle $\cF_d$ with $\dim\Ext_S^2\big(\cF_d,\cF_d\big)=p_g(S)$. 

For the base step let $\cF_{p_g(S)}$ be the bundle $\cF$ defined in Construction \eqref{conE} from $Z\in\mathcal Z$ (resp. $Z\in\mathcal Z_1$) which is ${p_g(S)}$--good and simple when $h_S^2+4\ge h_SK_S$ (resp. $\mu$--stable when $h_S^2>h_SK_S$), thanks to Lemma \ref{lCohomology} and Proposition \ref{pStable}. 

Now let $p_g(S)\ge d\ge 1$ and assume the existence of a simple (resp. $\mu$--stable) $d$--good bundle $\cF_d$ with $\dim\Ext_S^2\big(\cF_d,\cF_d\big)=p_g(S)$. It follows from Construction \ref{conEeta} the existence of a simple (resp. $\mu$--stable) $(d-1)$--good sheaf $\cF_{\varphi}$  with $\dim\Ext_S^2\big(\cF_{\varphi},\cF_{\varphi}\big)=p_g(S)$, thanks to Lemmas \ref{lCohomology}, \ref{lCohomologyEta} and Proposition \ref{pStableEta}. 

Thanks to \cite[Theorem 0.3]{Muk}, the point corresponding to $\cF_{\varphi}$ in $\Spl_S(2;c_1,c_2^{d-1})$ (resp. $M_S(2;c_1,c_2^{d-1})$) is smooth and it lies in a component $M$ of dimension
$$
\dim\Ext_S^1\big(\cF_{\varphi},\cF_{\varphi}\big)=h_S^2-K_S^2+5\chi(\cO_S)-4d+4. 
$$
Thus we have an integral open smooth neighborhood $B\subseteq M$ of the point corresponding to $\cF_{\varphi}$ and a flat family $\mathfrak F$ over $B$ with  ${\mathfrak F}_{b_0}\cong\cF_{\varphi}$ for some $b_0\in B$. 

Since $\cF_d$ satisfies the hypothesis of Theorem \ref{tAr} and $\cF_{\varphi}$ is locally free on $S\setminus\{\ p\ \}$, up to shrinking $B$ we can assume that  $\mathfrak F_b$ is locally free at $p$ for each $b\in B\setminus\{\ b_0\ \}$. Thus the sheaf $\cF_{d-1}:=\mathfrak F_b$ is a simple (resp. $\mu$--stable) $(d-1)$--good bundle  for each choice $b\in B\setminus\{\ b_0\ \}$. 

The smoothness of $B$ implies that the tangent space at $b\in B$ has dimension
\begin{align*}
h_S^2-K_S^2+4\chi(\cO_S)&-4d+5+\dim\Ext^2_S\big(\cF_{d-1},\cF_{d-1}\big)=\\
&=\dim\Ext^1_S\big(\cF_{d-1},\cF_{d-1}\big)=\dim\Ext^1_S\big(\cF_{\varphi},\cF_{\varphi}\big),
\end{align*}
hence $\dim\Ext_S^2\big(\cF_{d-1},\cF_{d-1}\big)=p_g(S)$. 

We can repeat the above steps untill $d=1$. After $p_g(S)$ steps we obtain the existence of a $0$--good bundle $\cF_0$ representing a smooth point in $\Spl_S(2;c_1,c_2^{(p_g(S))})$ (resp. $M_S(2;c_1,c_2^{(p_g(S))}$) lying in a component of dimension $h_S^2-K_S^2+5\chi(\cO_S)$, which is necessarily unique. 

In order to prove the theorem we finally notice that the map $\cG\mapsto\cG(h_S)$ induces a well defined isomorphism $\Spl_S(2;c_1,c_2^{(p_g(S))})\cong \Spl_S(2;c_1^{sp},c_2^{sp})$ (resp. $M_S(2;c_1,c_2^{(p_g(S))})\cong  M_S(2;c_1^{sp},c_2^{sp})$).
\qed
\medbreak

\section{Examples}
\label{sExamples}
In this section we give some applications of Theorem \ref{tMain}. We first show that Ulrich bundles are quite common on regular surfaces. Then we give examples of polarized surfaces fulfilling the hypothesis of Theorem \ref{tMain}. 

\begin{example}
\label{eLarge}
Let $S$ be a surface with $q(S)=0$, $p_g(S)\ge1$ and $\cO_S(H)$ an ample line bundle. Then there exists an integer $n_0$ such that for each $n\ge n_0$ the surface $S$ supports a special Ulrich bundle of rank $2$ with respect to $\cO_S(nH)$. If $\mathcal Z_1\ne\emptyset$ such bundles can be taken $\mu$--stable.

Indeed, it suffices to check that there is $n_0$ such that $\cO_S(nH)$ satisfies the hypothesis of Theorem \ref{tMain} for $n\ge n_0$, i.e. $n^2H^2>nHK_S$ and $h^0\big(S,\cO_S(2K_S-nH)\big)=0$.

Since $H$ is ample, it follows that there is $n_1$ such that $n_1H$ is very ample and non--special. Again the ampleness of $H$ implies $H^2\ge1$, hence there is $n_2$ such that $(2K_S-n_2H)H<0$, hence $h^0\big(S,\cO_S(2K_S-n_2H)\big)=0$ by the Nakai criterion. Finally, it is easy to check that for such an $n_2$ we also have   $(n_2H-K_S)H>0$. It then suffices to chose $n_0:=\max\{\ n_1,n_2\ \}$.

The case when there exists a relatively minimal elliptic fibration without multiple fibres $\psi \colon S\to \p1$ is of particular interest. Indeed, in this case we have $\kappa(S)\le1$, thus $h^0\big(S,\cO_S(2K_S-h_S)\big)=0$ as pointed out in the introduction. 

We know that $\cO_S(K_S)\cong\psi^*\cO_{\p1}(e-2)$ where $\cO_{\p1}(e):=(R^1\psi_*\cO_S)^\vee$ (see \cite[Theorem V.12.1]{B--H--P--V}): in particular $\kappa(S)\ge0$ implies $e\ge2$ and we will assume such a restriction in what follows, hence $HK_S\ge0$.  Moreover, $p_g(S)=h^0\big(\p1,\cO_{\p1}(e-2)\big)$ and $\chi(\cO_S)=e$, hence $q(S)=0$ (see \cite[Proposition V.12.2]{B--H--P--V} and its proof). 

Let $\cO_S(H)$ be a very ample line bundle and set $\cO_S(h_S):=\cO_S(H+K_S)$.  Thus $h^1\big(S,\cO_S(h_S)\big)=0$ by the Kodaira vanishing theorem and $\cO_S(h_S)$ is very ample because $\cO_S(K_S)$ is globally generated. Finally, $h_S^2= H^2+2HK_S>HK_S=h_SK_S$ because $HK_S\ge0$.

An example of the above set up is the case of an elliptic fibration with a section: in this case there are trivially no multiple fibres. This case has been inspected in \cite{MR--PL2} with a more direct approach.
\end{example}

In \cite{Fa} the author proved the existence of special Ulrich bundles of rank $2$ on each polarized $K3$ surface, extending some earlier results (see \cite{C--K--M, A--F--O}). The case of Enriques surfaces was examined in \cite{Bea2,Cs4}. We list below some other interesting examples.

\begin{example}
\label{eGeneral}
Let $S$ be a regular surface and $\cO_S(h_S)\cong\cO_S(n K_S)$ for some integer $n\ge3$. Then $S$ supports a $\mu$--stable special Ulrich bundle of rank $2$ with respect to $\cO_S(h_S)$.

Indeed, in this case $\mathcal Z_1\ne\emptyset$ (see \cite{L--M}) and  $K_S$ is ample, hence $S$ is minimal: then $h^1\big(S,\cO_S(h_S)\big)=0$ thanks to \cite[Proposition VII.5.3]{B--H--P--V}. Moreover, the conditions $h_S^2>h_SK_S$ and $h^0\big(S,\cO_S(2K_S-h_S)\big)=0$ are trivially satisfied in this case. 

The above result cannot be extended to the cases $1\le n\le2$ using the same argument. Indeed,  $h^0\big(S,\cO_S(2K_S-h_S)\big)\ge1$ in these cases. In particular, we cannot apply Theorem \ref{tAr} in the base case of the induction in the proof of Theorem \ref{tMain}. 

Notice that the case $n=2$ could be within reach: indeed it would be sufficient to check that $h^0\big(S,\cF\otimes\cI_{Z\vert S}\big)<h^0\big(S,\cF\big)$ in the proof of Proposition \ref{pStable}, because $h^0\big(S,\cO_S(2K_S-h_S)\big)=1$ (for some similar results in this direction see Example \ref{eRa}). 

On the other hand, the case $n=1$ seems to be out of reach with our methods, because $h^0\big(S,\cO_S(2K_S-h_S)\big)=p_g(S)$ in this case.
\end{example}

It is classically known (see \cite{Bu} and the references therein) that non--degenerate surfaces $S$ of degree $d=2N-2+s$ in $\p N$ are geometrically ruled or $K3$ when $s\le0$. In \cite{Bu} the author gives a description of surfaces with $1\le s\le N-3$ when $\cO_S(h_S)$ is non--special.
In all these cases $s\equiv h_SK_S \pmod2$,
$$
h_SK_S\le \left\lbrace\begin{array}{ll} 
3s\quad&\text{if $0\le s\le N-3$,}\\
s-2\quad&\text{if $s<0$.}
\end{array}\right.
$$

\begin{example}
\label{eBu}
Let $k\cong \bC$ and $S\subseteq\p N$ a non--ruled, non--degenerate surface of degree $2N-2+s$ where $N\ge3+s$ and $0\le s\le 3$ such that $\cO_S(h_S)$ is non--special. Moreover, if $p_g(S)\ge1$ and $\kappa(S)=2$, we also assume
\begin{equation}
\label{Buium}
N\ge\frac{5s+3}2.
\end{equation}
Then $S$ supports special Ulrich bundles of rank $2$ with respect to $\cO_S(h_S)$. Since $k\cong \bC$, it follows that $\mathcal Z_1\ne\emptyset$ hence such bundles can be taken $\mu$--stable.

The classification given in \cite[Propositions 2.1, 2.2, 2.3]{Bu} and \cite[Lemma 1.10]{Bu} imply that $S$ is always regular in these cases, hence the assertion follows from \cite{Cs4,Cs4erratum} if $p_g(S)=0$. Thus, from now on, we will assume $p_g(S)\ge1$. Notice that the inequality $N\ge3+s$ implies $h_S^2> h_SK_S$.

If $\kappa(S)\le1$, then $h^0\big(S,\cO_S(2K_S-h_S)\big)=0$ trivially. If $\kappa(S)=2$, then Inequality \eqref{Buium} implies $2h_SK_S-h_S^2<0$, hence $h^0\big(S,\cO_S(2K_S-h_S)\big)=0$ thanks to the Nakai criterion. In both the cases the assertion then follows from  Theorem \ref{tMain}.
\end{example}

\begin{example}
\label{eElliptic}
Let $S\subseteq\p N$ be a non--degenerate regular surface with $\kappa(S)\le1$, $N\ge p_g(S)-2$  and such that $\cO_S(h_S)$ is non--special. Then $S$ supports simple special Ulrich bundles of rank $2$ with respect to $\cO_S(h_S)$.  If $\mathcal Z_1\ne\emptyset$ and $N\ge p_g(S)+1$ such bundles can be taken $\mu$--stable.

Indeed, in this case, we already know that
$$
h^0\big(S,\cO_S(K_S-h_S)\big)=h^0\big(S,\cO_S(2K_S-h_S)\big)=0.
$$
The inequalities $N\ge p_g(S)-2$ and $N\ge p_g(S)+1$ are respectively equivalent to $h_S^2+4\ge h_SK_S$ and $h_S^2> h_SK_S$.  Thus the assertions follows from Theorem \ref{tMain}.

As a more concrete example, assume that $k\cong \bC$ and let $S\subseteq\p4$ be a surface with $\kappa(S)=1$ and degree $d=7,8$. In \cite{Ok2,Ok3} it is shown that  the surfaces $S$ are exactly the ones linked to a plane in either a quadro--quartic or a cubo--cubic complete intersection inside $\p4$.

The results in \cite{P--S} imply that the minimal free resolutions of $\cI_{S\vert\p4}$ look like
\begin{gather*}
0\longrightarrow \cO_{\p4}(-5)^{\oplus2}\longrightarrow \cO_{\p4}(-2)\oplus\cO_{\p4}(-4)^{\oplus2}\longrightarrow \cI_{S\vert\p4}\longrightarrow 0,\\
0\longrightarrow \cO_{\p4}(-5)^{\oplus2}\longrightarrow \cO_{\p4}(-3)^{\oplus2}\oplus\cO_{\p4}(-4)\longrightarrow \cI_{S\vert\p4}\longrightarrow 0
\end{gather*}
respectively. The cohomology of the exact sequence
$$
0\longrightarrow\cI_{S\vert \p 4}\longrightarrow\cO_{\p 4}\longrightarrow\cO_S\longrightarrow0
$$
implies that $\cO_S(h_S)$ is non--special, $q(S)=0$, $p_g(S)=2$.


In both the cases $S$ is determinantal. Nevertheless, $S$ is not defined by a matrix with linear entries, hence we cannot use the results proved in \cite{K--MR1, K--MR2} for deducing the existence of an Ulrich bundle.

Similarly, we cannot use the results in \cite{MR--PL2}.
Indeed, in \cite[Theorem III.4.2 and Observation III.3.5]{Lop}, the author proves that for a surface $S$ as above which is also very general, then  $\Pic(S)$ is generated by $h_S$ and $K_S$. 
If the canonical map $\psi \colon S\to \p1$ has a section $\sigma\colon\p1\to S$, then $C:=\im(\sigma)$ is a rational curve linearly equivalent to $xh_S+yK_S$ for some integers $x,y$. Since $C$ is the image of a section of the canonical map, it follows that $1=CK_S=xh_SK_S$ which is impossible, because the last number is a multiple of $3$ or $4$ (according with the two cases $h_S^2=7,8$).
\end{example}

\section{Ulrich--wildness}
\label{sWild}
In this very short section we deal with the size of the families of Ulrich bundles supported on the surfaces we are interested in.

A variety $X$ is called {\sl Ulrich--wild} if it supports families of dimension $p$ of pairwise non--isomorphic, indecomposable, Ulrich sheaves with respect to $\cO_X(h_X)$ for arbitrary large $p$. 

\begin{lemma}
\label{lFPL}
Let $X$ be a smooth variety endowed with a very ample line bundle $\cO_X(h_X)$.

If $\cE$ is a simple Ulrich bundle on $X$ such that $\dim \Ext_S^1\big(\cE,\cE\big)\ge3$,
then $X$ is Ulrich--wild.
\end{lemma}
\begin{proof}
It is an immediate consequence of  \cite[Theorem A, Corollary 2.1 and Remark 1.6 v)]{F--PL}, because every Ulrich bundle is semistable by \cite[Theorem 2.9]{C--H2} and each non--zero automorphism of a simple sheaf is an isomorphism.
\end{proof}

We deduce the following criterion for the surfaces we are interested in. 

\begin{proposition}
\label{pWild}
Let $S$ be a surface with $q(S)=0$, $p_g(S)\ge1$, endowed with a very ample non--special line bundle $\cO_S(h_S)$. Assume $h^0\big(S,\cO_S(2K_S-h_S)\big)=0$ and $h_S^2+4\ge h_SK_S$.

Then $S$ is Ulrich--wild.
\end{proposition}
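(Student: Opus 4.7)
The plan is to apply Theorem \ref{tFPL} to two general members of the moduli component produced by Theorem \ref{tMain}. Let $M \subseteq M_S(2;c_1^{sp},c_2^{sp})$ denote that component: it has dimension $h_S^2-K_S^2+5\chi(\cO_S)$, its points correspond to $\mu$--stable special Ulrich bundles of rank $2$, and tracking through the proof of Theorem \ref{tMain} also shows that $\dim\Ext_S^2(\cA,\cA)=p_g(S)$ for every such $\cA$.

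First I would observe that $\dim M>0$: the Hodge index theorem gives $h_S^2\cdot K_S^2\le(h_SK_S)^2$, and combining this with the hypothesis $h_SK_S<h_S^2$ and the fact that $K_S$ is pseudo--effective (a consequence of $\kappa(S)\ge0$, which forces $h_SK_S\ge0$) yields $K_S^2<h_S^2$, hence $h_S^2-K_S^2\ge1$ and $\dim M\ge 1+5\chi(\cO_S)\ge 6$. Thus two non--isomorphic bundles $\cA,\cB$ can be chosen in the smooth locus of $M$. Because $\cA$ and $\cB$ are $\mu$--stable of the same rank $2$ and the same slope, any non--zero morphism between them is ruled out: a rank--$1$ image would violate $\mu$--stability either of the source (as a proper quotient) or of the target (as a proper subsheaf), while an injection of full rank with identical $c_1$ and $c_2$ would force the quotient to have length zero, i.e.\ $\cA\cong\cB$. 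Hence $\Hom_S(\cA,\cB)=\Hom_S(\cB,\cA)=0$.

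The central step is a Riemann--Roch computation bounding $\dim\Ext_S^1(\cA,\cB)$ from below. Since $\cA$ and $\cB$ share the same Chern character, $\chi(\cA,\cB)=\chi(\cA,\cA)$. Using $\dim\Hom_S(\cA,\cA)=1$ (simplicity follows from $\mu$--stability), $\dim\Ext_S^1(\cA,\cA)=\dim M$ at a smooth point, and $\dim\Ext_S^2(\cA,\cA)=p_g(S)$, one obtains $\chi(\cA,\cA)=1-\dim M+p_g(S)$. The vanishing of $\Hom_S(\cA,\cB)$ together with $\dim\Ext_S^2(\cA,\cB)\ge0$ then gives
\begin{equation*}
\dim\Ext_S^1(\cA,\cB)\ \ge\ -\chi(\cA,\cA)\ =\ h_S^2-K_S^2+5\chi(\cO_S)-1-p_g(S)\ =\ h_S^2-K_S^2+4+4p_g(S),
\end{equation*}
where I substituted $\chi(\cO_S)=1+p_g(S)$ (using $q(S)=0$). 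The estimate $h_S^2-K_S^2\ge1$ from the Hodge index step above forces $\dim\Ext_S^1(\cA,\cB)\ge5\ge 3$, and Theorem \ref{tFPL} immediately concludes that $S$ is Ulrich--wild.

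The delicate bookkeeping that I expect to require a couple of lines of care consists of two items. First, the argument ruling out morphisms between $\cA$ and $\cB$ relies on the extra input that $\cB$ (and $\cA$) is $\mu$--stable \emph{and} that both sheaves share the same $c_2$, so the case of a full--rank injection is the genuine one to rule out. Second, the identity $\dim\Ext_S^2(\cA,\cA)=p_g(S)$, established in Theorem \ref{tMain} for a specific bundle, needs to be preserved for the general member of $M$: this follows by combining upper semicontinuity of $\dim\Ext_S^2(\cdot,\cdot)$ with the universal lower bound $\dim\Ext_S^2(\cA,\cA)\ge p_g(S)$ of \eqref{Mukai}, which forces equality on the open locus where the original constructed bundle sits.
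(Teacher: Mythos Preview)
Your argument is correct and follows the same overall strategy as the paper: pick two non--isomorphic $\mu$--stable special Ulrich bundles from the component produced by Theorem~\ref{tMain}, verify the $\Hom$ vanishings and the lower bound $\dim\Ext^1_S(\cA,\cB)\ge3$, and invoke Theorem~\ref{tFPL}.

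Where you genuinely diverge is in the numerical step showing that the moduli component is positive--dimensional and that $-\chi(\cA,\cB)\ge3$. The paper splits off the case $p_g(S)=0$ and appeals to an earlier paper \cite{Cs4}; for $p_g(S)\ge1$ it goes back to the auxiliary bundle $\cF$ of Construction~\ref{conE} (not the Ulrich bundle) and uses the trivial inequality $\dim\Ext^1_S(\cF,\cF)\ge0$ to obtain a lower bound on $h_S^2-K_S^2+5\chi(\cO_S)$. You instead use the Hodge index theorem together with $0\le h_SK_S<h_S^2$ to get $K_S^2<h_S^2$ directly, which yields $h_S^2-K_S^2\ge1$ and hence $\dim M\ge1+5\chi(\cO_S)\ge6$ for every $p_g(S)\ge0$. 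This is cleaner: it avoids the case distinction, the external citation, and the detour through $\cF$, and it gives a sharper bound. Your handling of the two ``delicate'' points (no nonzero morphisms between distinct $\mu$--stable sheaves of equal rank and Chern classes; persistence of $\dim\Ext^2_S=p_g(S)$ by semicontinuity plus the Mukai lower bound~\eqref{Mukai}) is also correct; the paper simply cites \cite[Proposition~1.2.7]{H--L} for the first and leaves the second implicit.
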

\begin{proof}
Consider the bundle  $\cF$ defined in Construction \eqref{conE}. We know from Proposition \ref{pStable} that $\cF$ is simple and $\dim\Ext_S^1\big(\cF,\cF\big)=p_g(S)$. Then, Equality \eqref{RRGeneral} yields
$$
h_S^2-K_S^2+\chi(\cO_S)+4=1+p_g(S)-\chi(\cF\otimes\cF^\vee)=\dim\Ext_S^1\big(\cF,\cF\big)\ge0.
$$
The same argument applied to the Ulrich bundle $\cE$ defined in Theorem \ref{tMain}, then yields
\begin{equation*}
\label{Bound}
\dim\Ext_S^1\big(\cE,\cE\big)=1+p_g(S)-\chi(\cE\otimes\cE^\vee)=h_S^2-K_S^2+5\chi(\cO_S)\ge3.
\end{equation*}
Thus the statement follows immediately from Lemma \ref{lFPL}.
\end{proof}

The above proposition extends \cite[Theorem 1.3]{Cs4} to the case $p_g(S)\ge1$.  

\begin{corollary}
Let $S$ be one of the surfaces described in Examples \ref{eLarge}, \ref{eGeneral}, \ref{eBu}, \ref{eElliptic}.

Then $S$ is Ulrich--wild.
\end{corollary}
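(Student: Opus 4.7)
The plan is essentially to observe that the corollary is an immediate application of Proposition \ref{pWild}: the hypotheses on $S$ and $\cO_S(h_S)$ appearing in Proposition \ref{pWild} are verbatim the hypotheses of Theorem \ref{tMain}, so any polarized surface to which Theorem \ref{tMain} applies is automatically Ulrich--wild. Hence it suffices to record that the three families of examples have already been checked to satisfy exactly this list of conditions.

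Concretely, I would proceed example by example. For Example \ref{eLarge}, the discussion there exhibits an integer $n_0$ such that for $n\ge n_0$ the line bundle $\cO_S(nD)$ is very ample and non--special, satisfies $(nD)^2>(nD)K_S$, and satisfies $h^0\big(S,\cO_S(2K_S-nD)\big)=0$ (via the Nakai criterion applied after making $(2K_S-nD)D<0$); together with $\kappa(S)\ge0$ and $q(S)=0$ built into the setup, this is precisely the hypothesis list of Proposition \ref{pWild}. For Example \ref{eGeneral}, with $\cO_S(h_S)\cong\cO_S(nK_S)$, $n\ge3$, regularity gives $q(S)=0$, minimality and \cite[Proposition VII.5.3]{B--H--P--V} provide non--speciality, and the inequalities $h_S^2=n^2K_S^2>nK_S^2=h_SK_S$ and $2K_S-h_S=-(n-2)K_S$ not effective are immediate. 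For Example \ref{eElliptic}, the discussion there shows $q(S)=0$, $\kappa(S)=1$, the polarization is non--special by assumption, $h_S^2>h_SK_S$ follows from $|h_S-K_S|\ne\emptyset$ via Nakai, and $h^0\big(S,\cO_S(2K_S-h_S)\big)=0$ follows from $(2K_S-h_S)h_S\le0$.

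With these verifications recorded, Proposition \ref{pWild} applies in each case, yielding Ulrich--wildness of $S$. The only potential obstacle is purely bookkeeping: making sure that the numerical conditions invoked in the three examples truly exhaust the five hypotheses of Proposition \ref{pWild}, rather than merely implying the existence statement of Theorem \ref{tMain}. Since the two hypothesis lists coincide, there is nothing further to verify, so the proof is a one--line citation after the case analysis.
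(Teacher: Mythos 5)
Your proposal is correct and matches the paper exactly: the paper's entire proof of this corollary is the one--line observation that it is a trivial consequence of Proposition \ref{pWild}, since the hypothesis lists of Proposition \ref{pWild} and Theorem \ref{tMain} coincide and each of Examples \ref{eLarge}, \ref{eGeneral}, \ref{eElliptic} already verifies those hypotheses. Your example--by--example bookkeeping simply makes explicit what the paper leaves implicit in the examples themselves.
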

\begin{proof}
The statement is a trivial consequence of Proposition \ref{pWild}.
\end{proof}

\section{Ulrich bundles on surfaces of low degree}
\label{sLow}
In \cite[Theorems 1.4 and 1.5]{Cs5} the author deals with the existence of special Ulrich bundles on surfaces of low degree on surfaces $S\subseteq\p N$. 

We start this section by improving \cite[Theorem 1.4]{Cs5}. We work over the complex field $\bC$, hence $S_0$ is dense for each surface $S$ with $\kappa(S)\ge0$ by Remark \ref{rS_0}.

\begin{theorem}
\label{tLowDegree}
Let $k\cong\bC$, $S\subseteq\p N$ a surface of degree $d\le8$.

Then the following assertions holds.
\begin{enumerate}
\item If $\kappa(S)\le1$, then $S$ supports special Ulrich bundles.
\item If $\kappa(S)=2$ and $S$ is general, then $S$ supports special Ulrich bundles.
\end{enumerate}
\end{theorem}
\begin{proof}
Thanks to \cite[Theorem 1.4]{Cs5} we know that if $\kappa(S)\ne1$, then $S$ supports Ulrich bundles $\cE$ of even rank $r^{sp}_{Ulrich}$ as in \cite[Table A]{Cs5} and such that
$$
c_1(\cE)= \frac{r^{sp}_{Ulrich}}2(3h_S +K_S).
$$
In particular $r^{sp}_{Ulrich}=2$ if either $\kappa(S)\le0$ or $\kappa(S)=2$ and $S$ is general. Thus such an Ulrich bundle $\cE$ is special in these cases.

If $\kappa(S)=1$, taking into account of the classification in  \cite[Table A]{Cs5} and the results in \cite{Ok2,Ok3}, we know that $S\subseteq\p 4$ is a properly elliptic surface of degree either $7$ or $8$. The existence of special Ulrich bundles on such surfaces has been proved in Example \ref{eElliptic}. 
\end{proof}

We now extend \cite[Theorem 1.5]{Cs5}. Here $\pi(S)$ denotes the genus of a general plane section of $S$.

\begin{theorem}
\label{tLowWild}
Let $k\cong\bC$, $S\subseteq\p N$ a surface of degree $d\le 8$.

Then $S$ is Ulrich--wild if and only if either $d\ge5$, or $d\le4$ and  $\pi(S)\ge1$.
\end{theorem}
\begin{proof}
The statement coincides with \cite[Theorem 1.5]{Cs5} when $\kappa(S)\ne1$. 

If $\kappa(S)=1$, then we know (see the proof of Theorem \ref{tLowDegree}) that $S$ is regular, non--special, $h_S^2+4\ge h_SK_S$ and $h^0\big(S,\cO_S(2K_S-h_S)\big)=0$. Then the statement follows from Proposition \ref{pWild}.
\end{proof}

We close the section with some  examples dealing with non--degenerate surfaces of degree $d>8$ in $\p4$.

\begin{example}
\label{eAR}
Let $k\cong\bC$ and $S\subseteq\p4$ a non--degenerate surface of degree $9$ with non--special $\cO_S(h_S)$ and $h_S^2>h_SK_S$. 

The surfaces as above are regular, thanks to \cite[Theorem 0.1]{A--R}. If $p_g(S)=0$, then $S$ supports special Ulrich bundles which are $\mu$--stable thanks to \cite{Cs4,Cs4erratum}, because $S$ is neither a rational normal scroll nor a plane by degree reasons. In what follows we will briefly deal with the case $p_g(S)\ge1$.

We have $h^2\big(S,\cO_S(h_S)\big)=0$ because $9=h_S^2> h_SK_S$. The so--called Severi Theorem (see \cite{Se}) implies $h^0\big(S,\cO_S(h_S)\big)=5$. Since $\cO_S(h_S)$ is non--special, it follows from Equality \ref{RRGeneral} that $h_SK_S=2\chi(\cO_S)-1$. Moreover, the adjunction formula returns $h_SK_S=2\pi(S)-11$. By combining such two equalities we obtain $\chi(\cO_S)=\pi(S)-5$. Finally, the inequality $9> h_SK_S$ forces $\pi(S)\le9$.

Thanks to the above discussion and to the classification in \cite[Theorem 0.1 and its proof]{A--R} we have to deal only with the following cases.
\begin{itemize}
\item A minimal properly elliptic surface $S$ with $h_SK_S=3$, $K_S^2=0$, $p_g(S)=1$.
\item A minimal surface $S$ of general type with $h_SK_S=5$, $K_S^2=1$, $p_g(S)=2$.
\item A surface $S$ linked with a possibly singular/reducible cubic scroll $Y$ via a cubo--quartic complete intersection: in this case $h_SK_S=7$, $K_S^2=2$, $p_g(S)=3$.
\end{itemize}

In the first case  case we trivially have $h^0\big(S,\cO_S(2K_S-h_S)\big)=0$, hence Theorem \ref{tMain} yields the existence of $\mu$--stable special Ulrich bundles on $S$.

In the remaining cases the vanishing $h^0\big(S,\cO_S(2K_S-h_S)\big)=0$ is not evident as in the previous case. E.g., let us examine the second case. If $h^0\big(S,\cO_S(2K_S-h_S)\big)\ne0$, then $h^0\big(S,\cO_S(2K_S)\big)\ge h^0\big(S,\cO_S(h_S)\big)=5$. Moreover, $h^2\big(S,\cO_S(2K_S)\big)=0$ by the Serre duality. Thus Equality \eqref{RRGeneral} implies
$$
h^0\big(S,\cO_S(2K_S)\big)-h^1\big(S,\cO_S(2K_S)\big)=K_S^2+\chi(\cO_S).
$$
It follows that $h^1\big(S,\cO_S(2K_S)\big)\ge1$, hence $S$ would not be minimal thanks to \cite[Corollary VII.5.4]{B--H--P--V}, a contradiction. Thus $h^0\big(S,\cO_S(2K_S-h_S)\big)=0$, hence the existence of $\mu$--stable special Ulrich bundles on $S$ still follows from Theorem \ref{tMain}.

In the third case the same argument does not hold. Indeed $\kappa(S)\ge1$ and $K_S^2=2$. The minimal model $S_{min}$ of $S$ satisfies $K_{S_{min}}^2\ge2$, thus $\kappa(S)=2$. Nevertheless, the vanishing $h^0\big(S,\cO_S(2K_S-h_S)\big)=0$ can be proved also in this case. Indeed, in \cite[Section (2.13)]{A--R} the authors show that the general hyperplane section $C$ of the surface $Y$ linked to $S$ is aCM with $p_a(C)=0$. This fact and the Riemann--Roch theorem on $C$ implies that the ideal of $C$, hence of $Y$, is generated by the minors of a $3\times2$ matrix of linear forms. Thus there exists an exact sequence of the form
$$
0\longrightarrow\cO_{\p4}(-3)^{\oplus2}\longrightarrow\cO_{\p4}(-2)^{\oplus3}\longrightarrow\cI_{Y\vert\p4}\longrightarrow0.
$$
Thanks to \cite{P--S} there is a resolution of the form
$$
0\longrightarrow\cO_{\p4}(-5)^{\oplus3}\longrightarrow\cO_{\p4}(-3)\oplus\cO_{\p4}(-4)^{\oplus3}\longrightarrow\cI_{X\vert\p4}\longrightarrow0.
$$
Thus \cite[Proposition II.2.4]{DA} implies $h^0\big(S,\cO_S(2K_S-h_S)\big)=0$, hence Theorem \ref{tMain} yields the existence of $\mu$--stable special Ulrich bundles on $S$. 
\end{example}

\begin{example}
\label{eRa}
Let $k\cong\bC$ and $S\subseteq\p4$ a non--degenerate surface of degree $10$ with non--special $\cO_S(h_S)$ and $h_S^2> h_SK_S$. 

Surfaces as above are classified in \cite[Theorem 0.1]{Ra}. If $q(S)=p_g(S)=0$, then $S$ is not a rational normal scroll by degree reasons, hence it supports special Ulrich bundles which are $\mu$--stable from  \cite{Cs4,Cs4erratum}.

From now on we will assume that $q(S)$ and $p_g(S)$ do not vanish simultaneously. The same argument of Example \ref{eAR} yields $h_SK_S=2\chi(\cO_S)$ and $h_SK_S=2\pi(S)-12$, hence $\chi(\cO_S)=\pi(S)-6$ and $\pi(S)\le10$. Thus the results in \cite[Theorem 0.1 and its proof in Section 9]{Ra} lead us to deal only with the following cases.
\begin{itemize}
\item An abelian surface.
\item A bielliptic surface.
\item A minimal surface $S$ of general type with $h_SK_S=6$, $K_S^2=3$, $p_g(S)=2$: in this case $\cO_S(h_S)\cong\cO_S(2K_S-A)$, where $A$ is a curve such that $A^2=-2$ and $AK_S=0$.
\item A minimal surface $S$ of general type with $h_SK_S=8$, $K_S^2=4$, $q(S)=0$, $p_g(S)=3$: in this case $\cO_S(h_S)\cong\cO_S(2K_S-A_1-A_2-A_3)$,  where the $A_i$'s are disjoint curves such that $A_i^2=-2$ and $A_iK_S=0$.
\end{itemize}

The existence of special Ulrich bundles on Abelian or bielliptic surfaces has been proved in \cite{Bea1,Bea3}. Such bundles can be taken $\mu$--stable: see \cite{Bea1} for abelian surfaces and  \cite{Cs6,Cs6erratum} for bielliptic surfaces. 

In the other cases $h^0\big(S,\cO_S(2K_S-h_S)\big)=1$, hence Theorem \ref{tMain} does not give any information on the  existence of special Ulrich bundles on such an $S$.
\end{example}

\begin{example}
\label{eCs}
Let $k\cong\bC$ and $S\subseteq\p4$ a non--degenerate surface of degree $d\ge 11$ with $\cO_S(h_S)$ non--special and $h_S^2> h_SK_S$. 

In \cite[Theorems 1 and 2]{I--M} and \cite[Proposition]{M--R} the authors classify non--special and non--degenerate surfaces $S\subseteq\p4$ which are not of general type. As a by--product they show that such surfaces have degree at most $10$. Thus $S$ is necessarily a surface of general type, hence $\chi(\cO_S)\ge1$.  Equality \eqref{RRGeneral}, the vanishings
\begin{equation}
\label{Vanishing}
h^0\big(S,\cO_S(K_S-h_S)\big)=0,\qquad h^1\big(S,\cO_S(K_S-h_S)\big)=h^1\big(S,\cO_S(h_S)\big)=0
\end{equation}
and the equality $h^0\big(S,\cO_S(h_S)\big)=5$ (see \cite{Se}) imply that
$$
h_SK_S=2\chi(\cO_S)+d-10\ge3:
$$
hence the condition $h_SK_S< h_S^2=d$ yields $\chi(\cO_S)\le4$. The double point formula  (see \cite{Ha2}, Example A.4.1.3) is
\begin{equation}
\label{Double}
K_S^2=\chi(\cO_S)+25+\frac{d(d-15)}2.
\end{equation}

The Hodge index theorem for the divisors $h_S$, $K_S$, Equality \eqref{Double} and the hypothesis $h_SK_S< d$ then yields $d\le 12$. 

Taking into account the bound $1\le\chi(\cO_S)\le4$, computing $K_S^2,h_SK_S$, applying again the Hodge index theorem for the divisors $h_S$, $K_S$ and Equality \eqref{Double} one easily checks that the case $d=12$ cannot occur. If $d=11$, the same argument yields the  $\chi(\cO_S)=4$, $K_S^2=7$, $h_SK_S=9$. If $S$ is not minimal, then $K_S=K_0+E$ where $K_0E=0$, $K_0^2\ge8$ and $h_SK_0\le 8$. The Hodge index theorem for the divisors $h_S$, $K_0$, then yields a contradiction. It follows that $S$ is minimal. If $q(S)\ge1$, then $p_g(S)\ge4$, hence we should have $K_S^2\ge8$, thanks to \cite[Th\'eor\ga eme 6.1]{De--Bea}, a contradiction. We deduce that $S$ is minimal and regular. 

We do not know if such a surface exists. Anyhow, if it exists, let $C\in\vert K_S\vert$: we have $\deg(C)=h_SK_S=9$ and $p_a(C)=K_S^2+1=8$. The cohomology of the exact sequence
\begin{equation}
\label{seqCurve}
0\longrightarrow\cO_S(-C)\longrightarrow\cO_S\longrightarrow\cO_C\longrightarrow0,
\end{equation}
tensored by $\cO_S(2K_S-h_S)$, Equalities \eqref{Vanishing} and the Riemann--Roch theorem on $C$ imply
\begin{align*}
h^0\big(S,\cO_S(2K_S-h_S)\big)&=h^0\big(C,\cO_C\otimes\cO_S(2K_S-h_S)\big)=\\
&=h^0\big(C,\cO_C(K_C)\otimes\cO_S(h_S-2K_S)\big)-2.
\end{align*}
The adjunction formula on $S$ returns  $\cO_C(K_C)\cong\cO_C\otimes\cO_S(2K_S)$, hence
$$
h^0\big(S,\cO_S(2K_S-h_S)\big)=h^0\big(C,\cO_C(h_C)\big)-2
$$
Thus $h^0\big(C,\cO_C(h_C)\big)\ge2$. If $h^0\big(C,\cO_C(h_C)\big)=2$, then the cohomology of Sequence \eqref{seqCurve} tensored by $\cO_S(h_S)$ would imply that $C$ is contained in three linearly independent hyperplanes of $\p4$, hence $C$ would be a line, a contradiction. We deduce that $3\le h^0\big(C,\cO_C(h_C)\big)$, hence $1\le h^0\big(S,\cO_S(2K_S-h_S)\big)$: again Theorem \ref{tMain} does not give any information on Ulrich bundles.
\end{example}

\bigskip
\noindent
Gianfranco Casnati,\\
Dipartimento di Scienze Matematiche, Politecnico di Torino,\\
c.so Duca degli Abruzzi 24,\\
10129 Torino, Italy\\
e-mail: {\tt gianfranco.casnati@polito.it}

\end{document}